\newif\ifnotesw\noteswtrue
\newtheorem{theorem}{Theorem}
\newtheorem{lemma}[theorem]{Lemma}
\newtheorem{corollary}[theorem]{Corollary}
\newcommand{\remove}[1]{}
\newcommand{\copwin}{\mathscr{C}}
\newcommand{\copmove}{{\rightarrow}}
\newcommand{\robmove}{{\rightarrow}}
\newcommand{\crmove}{{\twoheadrightarrow}}
\newcommand{\st}[2]{({#1}\,;\,{#2})}
\newcommand{\copst}[2]{(\underline{#1}\,;\,{#2})}
\newcommand{\robst}[2]{({#1}\,;\,\underline{#2})}
\newcommand{\bc}{C}
\begin{document}
%\addtolength{\baselineskip}{\baselineskip}
%\addtolength{\parskip}{\parskip}
%\spacing{1.4}
%\parskip=+3pt

\def\proofend{\hfill$\Box$\medskip}
\def\Proof{\noindent{\bf Proof. }}
\newcommand{\ProofOf}[1]{\noindent{\bf Proof of {#1}. }}

\newenvironment{proofof}[1]{\ProofOf{#1}}{\hfill $\Box$ \medskip}

\def\cN{\overline{N}}
\def\P{{\cal{P}}}
\def\cop{{\mathrm{cop}}}
\def\reg{{\mathrm{reg}}}

\title{On the minimum order of $k$-cop-win graphs}

\author[W. Baird]{William Baird}
\address{Department of Mathematics, Ryerson University, Toronto, ON, Canada, M5B 2K3}
\email{\tt liam.baird@gmail.com}
\author[A. Beveridge]{Andrew Beveridge}
\address{Department of Mathematics, Statistics and Computer Science, Macalester College, Saint Paul, MN, U.S.A.\ 55105}
\email{\tt abeverid@macalester.edu}
\author[A. Bonato]{Anthony Bonato}
\address{Department of Mathematics, Ryerson University, Toronto, ON, Canada, M5B 2K3}
\email{\tt abonato@ryerson.ca}
\author[P. Codenotti]{Paolo Codenotti}
\address{Institute for Mathematics and its Applications, University of Minnesota, Minneapolis, MN, U.S.A.\ 55455}
\email{\tt paolo@ima.umn.edu}
\author[A. Maurer]{Aaron Maurer}
\address{Department of Mathematics, Carleton College, Northfield, MN, U.S.A.\ 55057}
\email{\tt maurera@carleton.edu}
\author[J. McCauley]{John McCauley}
\address{Department of Mathematics, Haverford College, Haverford, PA, U.S.A.\ 19041}
\email{\tt ???}
\author[S. Valeva]{Silviya Valeva}
\address{Department of Mathematics,  University of Iowa, Iowa City, IA, U.S.A.\ 52242}
\email{\tt silviya-valeva@uiowa.edu}

\begin{abstract}
We consider the minimum order graphs with a given cop number. We prove
that the minimum order of a connected graph with cop number $3$ is $10$, and show that the Petersen graph is the unique isomorphism type of graph with this property. We provide the results of a computational search on the cop number of all graphs up to
and including order $10.$ A relationship is presented between the minimum order of graph with cop number $k$ and Meyniel's conjecture
on the asymptotic maximum value of the cop number of a connected graph.
\end{abstract}

\maketitle

\section{Introduction}

\emph{Cops and Robbers} is vertex-pursuit game played on graphs that has
been the focus of much recent attention. Throughout, we only consider finite, connected, undirected graphs.
There are two players consisting of
a set of \emph{cops} and a single \emph{robber}. The game is played over a
sequence of discrete time-steps or \emph{rounds}, with the cops going first
in the first round and then playing alternate time-steps. The cops and
robber occupy vertices, and more than one cop may occupy a vertex.
 When a player is ready to move in a round they must
move to a neighboring vertex. We include loops on each vertex so that
players can \emph{pass}, or remain on their own vertex. Observe that any
subset of cops may move in a given round. The cops win if after some finite
number of rounds, one of them can occupy the same vertex as the robber. This
is called a \emph{capture}. The robber wins if he can avoid capture
indefinitely. A \emph{winning strategy for the cops} is a set of rules that
if followed, result in a win for the cops, and a \emph{winning strategy for the
robber} is defined analogously.

If we place a cop at each vertex, then the cops are guaranteed to win.
Therefore, the minimum number of cops required to win in a graph $G$ is a
well defined positive integer, named the \emph{cop number} of the graph $G.$
We write $c(G)$ for the cop number of a graph $G$, and say that a graph
satisfying $c(G)=k$ is $k$-\emph{cop-win}. For example, the Petersen graph is $3$-cop-win. If $k=1,$ then we say that $G$ is \emph{cop-win}.
Nowakowski and Winkler~%
\cite{nw}, and independently Quilliot~\cite{q}, considered the game with one
cop only; the introduction of the cop number came in~\cite{af}. Many papers
have now been written on cop number since these three early works; see the
book \cite{bonato} for additional references and background on the cop
number.

Meyniel's conjecture is one of the deepest unsolved problems on the cop
number. It states that for a connected graph $G$ of order $n,$ $c(G)=O(\sqrt{%
n}).$ Hence, the largest cop number of a graph is asymptotically bounded above by $d\sqrt{n}$
for a constant $d.$ The conjecture has so far resisted all attempts to
resolve it, and the best known bounds (see, for example, \cite{lu}) do not even prove that $%
c(G)=O(n^{\varepsilon }),$ for $\varepsilon <1.$

The goal of the current study is to investigate the minimum order of graphs with a given cop number. For a fixed positive integer $k$, define $m_{k}$ to be the minimum order of a connected graph $G$ satisfying $c(G)\geq k.$ Define $M_{k}$ to be the minimum order of a connected $k$%
-cop-win graph. It is evident that the $m_{k}$ are monotonically increasing, and $m_{k}\leq M_{k}.$

Up until this study, only the first two values of these parameters where known: $m_1=M_1=1$ and $m_2=M_2=4$ (witnessed by the $4$-cycle). We derive that $m_3=M_3=10$; interestingly, the Petersen graph is the unique isomorphism type of $3$-cop-win graph with order $10.$ In addition to a proof of this fact, we performed a computer search to calculate the cop number of every connected graph on 10 or fewer vertices (there are nearly 12 million such unlabelled graphs). We performed this categorization by checking for cop-win orderings~\cite{nw} and using an algorithm provided in~\cite{bcp}. We present these computational results in the next section.

We prove the following theorems.

\begin{theorem}\label{thm:9-vtx-c2}
If $G$ is a graph on at most $9$ vertices, then $c(G)\leq 2$.
\end{theorem}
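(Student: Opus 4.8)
The plan is to argue by contradiction. Suppose $G$ is connected, $|V(G)|\le 9$, and $c(G)\ge 3$; I will aim to conclude $|V(G)|\ge 10$. The first move is to pass to a reduced graph. Call a vertex $u$ a \emph{corner} if $N[u]\subseteq N[v]$ for some vertex $v\ne u$. If $u$ is a corner then $G-u$ is again connected and is a retract of $G$, and a standard ``shadow of the robber'' argument (run an optimal strategy for $G-u$ against the image of the robber under the retraction, then spend one extra move when the shadow is caught) shows $c(G-u)=c(G)$. Deleting corners one at a time thus produces a connected, \emph{corner-free} graph $G'$ with $c(G')=c(G)\ge 3$ and $|V(G')|\le 9$; note that $G'$ is not a single vertex, that $\delta(G')\ge 2$, and that any vertex of degree $2$ in $G'$ has two non-adjacent neighbours.

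Next I would use the easy bound $c(G')\le\gamma(G')$: the cops start on a minimum dominating set, the robber's chosen starting vertex is then adjacent to some cop, and that cop captures the robber on the first move. Hence we may assume $\gamma(G')\ge 3$, so by Ore's bound $|V(G')|\ge 6$. Now observe that if $G'$ also had girth at least $5$ and minimum degree at least $3$, then the Moore-type count (take a vertex of degree $\ge 3$, its $\ge 3$ neighbours, and the $\ge 2$ further neighbours of each, all distinct because the girth is at least $5$) would force $|V(G')|\ge 1+3+3\cdot 2=10$, the contradiction we want. So it remains to show that two cops suffice on every corner-free connected graph on at most $9$ vertices that has $\gamma\ge 3$ and either girth at most $4$ or a vertex of degree $2$. (The equality case $|V(G')|=10$ of the Moore count, together with the classical uniqueness of the $(3,5)$-cage, is exactly what will pin down the Petersen graph in the companion results.)

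The remaining case analysis is where the real difficulty lies. The intended tool is path-guarding: a single cop can guard any isometric path $P$, after which the robber can never enter $P$ and so is confined to $V(G')\setminus V(P)$. Applied, say, to the isometric path $a$--$v$--$b$ through a degree-$2$ vertex $v$ (isometric because $a\not\sim b$), this leaves the robber on at most six vertices, which the second cop should be able to clear, falling back on the first cop to reposition if the residual region is not itself cop-win. Making this work uniformly is delicate — for instance $C_7$, $C_8$ and $C_9$ show that corner-free graphs with $\gamma=3$ and $\delta=2$ really do occur (they have cop number $2$, consistently with the claim), and the residual regions produced by guarding need not be cop-win — so I expect that the cleanest way to dispose of the finitely many stubborn configurations is simply to invoke the exhaustive computation over all connected graphs of order at most $10$ described in the next section.
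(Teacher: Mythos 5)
Your preliminary reductions are all sound: removing corners preserves the cop number by the retract/shadow argument, $c(G)\leq\gamma(G)$ is correct, and the Moore-type count correctly shows that a corner-free graph with $\delta\geq 3$ and girth at least $5$ has at least $10$ vertices. But the proposal has a genuine gap at exactly the point where the work has to be done. After the reductions you are left with corner-free graphs on at most $9$ vertices having $\gamma\geq 3$ and either girth at most $4$ or a vertex of degree $2$, and for these you offer only a sketch (guard an isometric path $a$--$v$--$b$ through a degree-$2$ vertex) that you yourself note does not close: the residual region of up to six vertices need not be cop-win for the single remaining cop, and the case $\delta\geq 3$ with girth at most $4$ is not addressed at all. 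Falling back on ``invoke the exhaustive computation'' is not an admissible completion here --- if the computer search over all graphs of order at most $10$ is taken as given, the theorem follows from it outright and none of your reductions are needed; the paper explicitly presents the computation as an \emph{independent verification} of a theorem that is to be proved by hand.

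For comparison, the paper's proof runs through maximum degree rather than girth and domination: the key lemma (Lemma~\ref{lemma:gen-5-cycle}) shows that if some vertex $u$ has $\deg(u)\geq n-6$, then either $c(G)\leq 2$ or $G[V\setminus N[u]]$ is an induced $5$-cycle, via a case analysis of the possible induced subgraphs on the at most five vertices outside $N[u]$ and explicit two-cop strategies (one cop essentially pinned near $u$, the other clearing the small remainder using Lemma~\ref{obs:key}). For $n\leq 9$ every vertex of degree at least $4$ satisfies $\deg(u)\geq n-5$, so the $5$-cycle alternative is impossible and $c(G)\leq 2$; the case $\Delta(G)=3$ is then killed by Corollary~\ref{cor:deg4deg3}. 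If you want to salvage your approach, the missing piece is a complete, explicit two-cop strategy for each residual configuration --- which is precisely the kind of end-game analysis the paper packages into Lemma~\ref{obs:key} and Corollaries~\ref{cor:small-safe-nbhd} and~\ref{cor:one-deg-3}.
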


\begin{theorem}
 \label{thm:petersen}
The Petersen graph is the unique isomorphism type of graph on $10$ vertices that is $3$-cop-win.
\end{theorem}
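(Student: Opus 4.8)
My plan is to prove the two directions separately: first that the Petersen graph $P$ itself is $3$-cop-win, and then that no other graph on $10$ vertices is. For the first direction, $c(P)\le 3$ is immediate because $P$ has a dominating set of size $3$: placing one cop on each vertex of such a set, the robber is always adjacent to (or on) a cop and is captured on the cops' next move. For $c(P)\ge 3$ I would invoke the well-known fact that a graph of girth at least $5$ has cop number at least its minimum degree; since $P$ is cubic of girth $5$ this gives $c(P)\ge 3$, hence $c(P)=3$. (Alternatively one gives a short ad hoc escape argument for the robber against two cops on $P$.)

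For the second direction, let $G$ be a connected graph on $10$ vertices with $c(G)\ge 3$; I will show $G\cong P$. (By Theorem~\ref{thm:9-vtx-c2} this in fact forces $c(G)=3$, but I only use $c(G)\ge 3$.) The two workhorses are: \emph{(a) corner removal} --- if some vertex $v$ is dominated by a vertex $u$, i.e.\ $N[v]\subseteq N[u]$, then the map sending $v$ to $u$ and fixing every other vertex is a retraction of $G$ onto $G-v$, so $c(G-v)\le c(G)$ since cop number is non-increasing under retracts, while a cop strategy on $G-v$ lifts to $G$ by treating a robber on $v$ as a robber on $u$, giving $c(G)\le c(G-v)$; thus $c(G-v)=c(G)\ge 3$ on $9$ vertices, contradicting Theorem~\ref{thm:9-vtx-c2}. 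Hence $G$ has no dominated vertex, so $\delta(G)\ge 2$ and the two neighbours of any degree-$2$ vertex are nonadjacent. \emph{(b) Parking a cop} --- for any vertex $v$, stationing one cop on $v$ forever confines the robber to a single connected component of $G-N[v]$, so $c(G)\le 1+c(G-N[v])$ (reading the cop number of a disconnected graph as the maximum over components). Since $c(G)\ge 3$ and $m_2=4$, this means $G-N[v]$ contains a component on at least $4$ vertices for every $v$, whence $\deg(v)\le 5$ and so $\Delta(G)\le 5$; moreover when $\deg(v)\in\{4,5\}$ it pins $G-N[v]$ down to a short list (it must contain one of $C_4$, $C_5$, or one of the few $5$-vertex graphs of cop number $2$).

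The heart of the argument --- and the step I expect to be the main obstacle --- is to push this all the way down to $G$ being cubic and then to girth at least $5$. For each of the finitely many configurations surviving $\Delta(G)\in\{4,5\}$, and likewise for a putative degree-$2$ vertex (whose two nonadjacent neighbours form a length-$2$ isometric path that one cop can guard in the style of Aigner--Fromme, again confining the robber to a region that turns out to be too small), I would exhibit an explicit two-cop winning strategy --- or a retract onto a graph already known to have cop number at most $2$ from Theorem~\ref{thm:9-vtx-c2} or from $m_2=4$ --- contradicting $c(G)\ge 3$. This leaves $G$ cubic on $10$ vertices. If $G$ had a triangle or a $4$-cycle, the no-dominated-vertex condition would force the neighbourhood of that short cycle into a very restricted shape inside a cubic $10$-vertex graph, and in each remaining case I would again produce a two-cop capture strategy; so $G$ has girth at least $5$.

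Finally, a cubic graph on $10$ vertices of girth at least $5$ must have girth exactly $5$ (by the Moore bound a cubic graph of girth $6$ needs at least $14$ vertices) and is unique --- it is the $(3,5)$-cage, namely the Petersen graph. Hence $G\cong P$, which completes the proof. Combining this with Theorem~\ref{thm:9-vtx-c2} and the equality $c(P)=3$ also yields $m_3=M_3=10$.
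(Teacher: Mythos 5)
The framing is sound and the completed steps are correct: $c(P)=3$ via the domination-number upper bound and the Aigner--Fromme girth-$5$ lower bound; the corner/retract argument showing a $3$-cop-win $G$ on $10$ vertices has no dominated vertex (hence $\delta(G)\ge 2$); the ``parking'' bound $c(G)\le 1+c(G-N[v])$ forcing $\Delta(G)\le 5$; and the endgame identification of a cubic girth-$5$ graph on $10$ vertices as the Petersen graph (a legitimate alternative to the paper's Lemma~\ref{fact:petersen}, which instead characterizes $P$ by the property that every $G[V\setminus N[u]]$ is a $6$-cycle).

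However, there is a genuine gap: everything between ``$\Delta(G)\le 5$, no dominated vertex'' and ``$G$ is cubic of girth at least $5$'' is asserted rather than proved. You write that for each surviving configuration with $\Delta(G)\in\{4,5\}$, for a putative degree-$2$ vertex, and for a triangle or $4$-cycle in the cubic case, you ``would exhibit an explicit two-cop winning strategy,'' but these strategies are exactly where all the difficulty lives, and they are not routine. In the paper this content occupies Lemma~\ref{lemma:gen-5-cycle} (a multi-claim, multi-case analysis showing that a vertex of degree $\ge n-6$ forces either $c(G)\le 2$ or a $5$-cycle complement), Corollary~\ref{cor:deg4deg3}, Lemma~\ref{lemma:deg-leq-3} (eliminating $\Delta(G)=4$ on $10$ vertices), Lemma~\ref{lemma:n-7} (showing that for a degree-$3$ vertex $u$ the graph $G[V\setminus N[u]]$ must be a $6$-cycle, which is what rules out short cycles), and the explicit trap sequences in the proof of Theorem~\ref{thm:petersen} for the non-regular cubic case. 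Note also that your one concrete suggestion for the degree-$2$ case --- guarding the isometric path through that vertex --- confines the robber only to a component of the remaining $7$ vertices, which is not ``too small'' for cop number $2$ (since $m_2=4$), so even that sub-step needs a real argument. As it stands the proposal is a plausible outline whose central case analysis remains to be done.
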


The proofs of Theorems~\ref{thm:9-vtx-c2} and \ref{thm:petersen}---which are deferred to Section~\ref{sec:petersen}---exploit new ideas which are of interest in their own right. In particular, we prove a suite of structural lemmas concerning the cop number of graphs containing a vertex whose co-degree is a small constant, namely with maximum degree at least $n-7$, where $n$ is the order of the graph.

Further, we prove that Meyniel's conjecture is equivalent to bounds on the values $m_k;$ see Theorem~\ref{iii}. We give lower bounds on the growth rates of the number of non-isomorphic $k$-cop-win graphs of a given order in Theorem~\ref{fkbounds}.

For background on graph theory see \cite{west}. We use the notation $v(G)=|V(G)|.$ We use the notation $V$ and $E$ for the vertices and edges of a graph $G$, respectively, if $G$ is clear from context. For $u,v \in V$, we write $u\sim
v$ when $u v\in E$.  For $S \subseteq V$, we write $u \sim
S$ when $u \notin S$ and there exists $v \in S$ such that $u \sim
v$. Given a vertex $v$, its  \emph{neighborhood} is $N(v) = \{ u \in V \mid (v,u) \in E \}$, and its \emph{closed neighborhood} is $N[v] = \{ v \} \cup N(v)$. We define $N(S)= \bigcup_{v \in S} N(v) \setminus S$ and $N[S] = \bigcup_{v \in S} N[v]$.  For convenience, we use the notation $N(u,v) = N(\{u,v\})$. A vertex $v$ is \emph{dominated} by the vertex $w$ if $N[v] \subseteq N[w]$. For $S\subseteq V(G)$, the subgraph induced by $S$ is denoted $G[S]$. We use the notation $X\setminus Y$ for the difference of sets. If $H$ is an induced subgraph of $G$, then we write $G-H$ for $G[V(G)\setminus V(H)]$. For a set $S$ of vertices of $G$, we also write $G-S$ for $G[V(G)\setminus S].$
For sets of vertices $S, T\subseteq V$, we denote the set of edges
between the two sets by $[S : T] = \{s t\in E\mid s\in S, t\in
T\}$, and we use $|S:T|$ to denote the cardinality of this set. We denote the minimum degree by
$\delta(G)$ and the maximum degree by $\Delta(G)$.
We generalize the latter symbol to subsets of vertices: for $S\subseteq V$,
$\Delta(S) =\max_{s\in S} \deg(s)$.

%%%%%%%%%%%%%%%%%%%%%%%%%%%%%%%%%%%%%%%%%%%%%%%%%

\section{Computational results}
\label{comp}

We present the results of a computer search on the cop number of small order graphs. For a positive integer $n$, define $f_{k}(n)$ to be the number of non-isomorphic connected $k$-cop-win
graphs of order $n$ (that is, the \emph{unlabelled} graphs $G$ of order $n$ with $c(G)=k$). Define $g(n)$ to be the number of non-isomorphic (not
necessarily connected) graphs of order $n,$ and $g_c(n)$ the number of non-isomorphic connected graphs of order $n.$ Trivially, for all $k,$ $f_k(n)\le g(n).$ The following table presents the values of $g,$ $g_c,$ $f_1,$ and $f_2$ for small orders.

\begin{table}[!h]
\begin{center}
\begin{tabular}{|c||c|c|c|c|c|c|}
\hline
order $n$ & $g(n)$ & $g_{c}(n)$ & $f_{1}(n)$ & $f_{2}(n)$ & $f_3(n)$\\ \hline\hline
1 & 1 & 1 & 1 & 0 & 0 \\ \hline
2 & 2 & 1 & 1 & 0 & 0\\ \hline
3 & 4 & 2 & 2 & 0 & 0\\ \hline
4 & 11 & 6 & 5 & 1 & 0\\ \hline
5 & 34 & 21 & 16 & 5 & 0\\ \hline
6 & 156 & 112 & 68 & 44 & 0\\ \hline
7 & 1,044 & 853 & 403 & 450 & 0\\ \hline
8 & 12,346 & 11,117 & 3,791 & 7,326 & 0\\ \hline
9 & 274,668 & 261,080	&65,561	&195,519 & 0\\ \hline
10&  12,005,168 &  11,716,571 &  2,258,313  & 9,458,257 & 1 \\ \hline
\end{tabular}
\end{center}
\end{table}

\vspace{.1in}

The values of $g$ and $g_c$ come from \cite{sloane}, $f_1$ was computed by checking for cop-win orderings~\cite{nw}, while $f_2$ and $f_3$ were computed using Algorithm~1 in \cite{bcp}.
Among these graphs there is only one graph $G$ of order $10$ that requires 3 cops to win (which independently verifies Theorems~\ref{thm:9-vtx-c2} and \ref{thm:petersen}).  The graph $G$ must be the Petersen graph, since we know that it is 3-cop win.

\section{Meyniel's conjecture and growth rates}\label{meys}

The following theorem, while straightforward to prove, sets up an unexpected connection
between Meyniel's conjecture and the order of $m_k.$
\begin{theorem}\label{iii}
\begin{enumerate}
\item $m_k = O(k^2).$
\item Meyniel's conjecture is equivalent to the property that%
\[
m_{k}=\Omega (k^{2}).
\]
\end{enumerate}
\end{theorem}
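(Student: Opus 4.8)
The plan is to prove the two parts separately. Part~(1) amounts to exhibiting, for every $k$, a connected graph on $O(k^{2})$ vertices with cop number at least $k$; part~(2) is then essentially an unwinding of the definitions.

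For part~(1) I would use the incidence graphs of finite projective planes. For a prime $q$, let $H_{q}$ denote the point--line incidence graph of $PG(2,q)$: it is a connected bipartite graph that is $(q+1)$-regular, has $v(H_{q})=2(q^{2}+q+1)$ vertices, and has girth $6$. Since it is well known (and not hard to see) that a graph of girth at least $5$ has cop number at least its minimum degree, this gives $c(H_{q})\ge q+1$. (The large-girth bound is the only genuine input: in a triangle-free graph with no $4$-cycle each cop can, intuitively, threaten at most one neighbour of the robber's current vertex, so with fewer than $\delta(G)$ cops the robber always has a safe move, and $H_{q}$ is large enough for the robber to begin on a safe vertex.) Now, given $k$, Bertrand's postulate supplies a prime $q$ with $k\le q\le 2k$ (small $k$, e.g.\ $m_{1}=1$, being handled directly). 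Then $c(H_{q})\ge q+1\ge k$, so by the definition of $m_{k}$ we get $m_{k}\le v(H_{q})=2(q^{2}+q+1)=O(k^{2})$; in particular every $m_{k}$ is finite.

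For part~(2), first suppose Meyniel's conjecture holds: there is a constant $d$ with $c(G)\le d\sqrt{v(G)}$ for every connected graph $G$. By part~(1) we may fix, for each $k$, a connected graph $G_{k}$ on $m_{k}$ vertices with $c(G_{k})\ge k$; then $k\le c(G_{k})\le d\sqrt{m_{k}}$, so $m_{k}\ge k^{2}/d^{2}$, i.e.\ $m_{k}=\Omega(k^{2})$. Conversely, suppose $m_{k}=\Omega(k^{2})$, say $m_{k}\ge c_{0}k^{2}$ for all $k\ge k_{0}$, and let $G$ be an arbitrary connected graph on $n$ vertices with $k:=c(G)$. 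If $k\ge k_{0}$, then since $G$ is connected with $c(G)\ge k$ its order satisfies $n\ge m_{k}\ge c_{0}k^{2}$, whence $c(G)=k\le\sqrt{n/c_{0}}$; if $k<k_{0}$, then $c(G)<k_{0}\le k_{0}\sqrt{n}$. Either way $c(G)\le\max\{1/\sqrt{c_{0}},\,k_{0}\}\sqrt{n}=O(\sqrt{n})$, which is Meyniel's conjecture.

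The implications in part~(2) are purely formal, so the only substantive step is part~(1), for which one must recall the projective-plane incidence graphs (or any equivalent ``Meyniel extremal'' family of connected graphs on $\Theta(q^{2})$ vertices with cop number $\Theta(q)$) together with the girth/minimum-degree bound on the cop number, and then use Bertrand's postulate to cover the values of $k$ not of the form $q+1$. If one wishes to avoid the girth bound altogether, it suffices to cite any published construction giving $m_{k}=O(k^{2})$, after which the equivalence in part~(2) goes through verbatim.
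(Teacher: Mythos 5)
Your proposal is correct and follows essentially the same route as the paper: projective-plane incidence graphs (with the girth/minimum-degree bound on the cop number) plus Bertrand's postulate for part (1), and a formal unwinding of the definitions for part (2). The only difference is cosmetic --- you argue both directions of part (2) directly with explicit constants, whereas the paper phrases them as contradictions in $o/\omega$ notation; your version is, if anything, slightly more careful with the quantifiers.
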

\noindent Hence, if Meyniel's conjecture holds, then Theorem~\ref{iii} implies that $m_k = \Theta(k^2).$

\begin{proof}
The incidence graphs of projective planes have order $2(q^{2}+q+1),$ where $q$
is a prime power, and have cop number $q+1$; see \cite{bb} or \cite{pralat}.  Hence, this family of graphs show that for $q
$ a prime power,%
\[
m_{q+1}=O(q^{2})
\]
Now fix $k$ a positive integer. Bertrand's postulate (which states that all integers $x>1,$ there is a prime $q$ between $x$ and $2x$; see \cite{che,erdosb}) provides a prime $q$
with $k< q< 2k$. Hence,

\begin{equation*}
m_{k}\leq m_{q}\leq m_{q+1}=O(q^{2})=O((2k)^{2})=O(k^{2}).
\end{equation*}

For item (2), if $m_{k}=o(k^{2})$, then there is some connected graph $G$ with order $%
o(k^{2})$ and cop number $k$. But Meyniel's conjecture implies that $%
c(G)=o(k)$, a contradiction. Hence, Meyniel's conjecture implies that $%
m_{k}=\Omega (k^{2}).$

For the reverse direction, suppose that $m_{k}=\Omega (k^{2}).$ For a contradiction,
suppose that Meyniel's conjecture is false. Then there is a connected graph $%
G$ of order $n$ with $c(G)=k=\omega (\sqrt{n}).$ Then $\sqrt{n}=o(k)$, and
so $n=o(k^{2})$. But then $m_{k}\leq n=$ $o(k^{2}),$ a contradiction. \end{proof}

While the parameters $m_k$ are non-decreasing, an open problem is to determine whether the $M_k$ are in fact non-decreasing.
A possibly more difficult problem is to settle whether $m_k=M_k$ for all $k\ge 1.$
The gap in our knowledge of the parameters $m_{k}$ and $M_{k}$ points to the
question of growth rates for the classes of connected $k$-cop-win graphs.
It is well known (see \cite{sloane} for example) that
\begin{eqnarray*}
g(n) &=&(1+o(1))\frac{2^{\binom{n}{2}}}{n!} \\
&=&2^{\frac{1}{2}n^{2}-\frac{1}{2}n-n\log_2 n + n\log_2 e - \Theta(\log n)},
\end{eqnarray*}
where the second equality follows by Stirling's formula. The following theorem supplies a super-exponential lower bound to the parameters $f_k.$ A vertex is \emph{universal} if it is adjacent to all others. If $f:X\rightarrow Y$ is a function and $S\subseteq X,$ then we denote the restriction of $f$ to $S$ by $f\upharpoonright S.$

\begin{theorem}
\label{fkbounds}
\begin{enumerate}
\item For all $n>1,$ $g(n-1)\leq f_{1}(n).$

\item For $k>1,$ and all $n>2m_{k},$ $g(n-m_{k}-1)\leq f_{k}(n).$
\end{enumerate}
\end{theorem}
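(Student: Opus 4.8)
The plan is to prove both parts by the same device: take an arbitrary connected graph $H$ on the smaller number of vertices, attach a small "gadget" that forces the cop number up to exactly the desired value while keeping the construction injective on isomorphism types. For part (1), given a connected graph $H$ on $n-1$ vertices, I would form $G$ by adding one universal vertex $u$. Then $G$ is connected, has order $n$, and $u$ dominates every other vertex, so $G$ is cop-win by the Nowakowski--Winkler dismantlability criterion (a single cop sitting on $u$ already sees all of $V(G)$); hence $c(G)=1$ and $G$ contributes to $f_1(n)$. The map $H\mapsto G$ is injective on isomorphism classes because $H$ is recovered as $G$ minus one of its universal vertices — strictly speaking one must check that adding a universal vertex to non-isomorphic $H$'s cannot produce isomorphic $G$'s, which follows since from $G$ one can recover $H$ up to the choice of which universal vertex to delete, and deleting any universal vertex of $G$ yields a graph isomorphic to $H$. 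So $g(n-1)\le f_1(n)$.

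For part (2), fix $k>1$ and let $G_0$ be a connected graph of order $m_k$ with $c(G_0)\ge k$; since the parameters $m_k$ are non-decreasing one can in fact assume $c(G_0)=k$ by choosing $G_0$ minimal, but it is cleaner to just take any witness and note what we need is only that $c$ of the final graph is exactly $k$. Given an arbitrary connected graph $H$ on $n-m_k-1$ vertices, build $G$ on $n$ vertices as follows: take the disjoint union of $H$ and $G_0$, add a new vertex $w$ adjacent to every vertex of $H$ (so $w$ is universal to $H$), and add every edge between $w$ and $V(G_0)$, or more simply make $w$ adjacent to all of $V(H)\cup V(G_0)$, i.e. $w$ is universal in $G$. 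I would then argue $c(G)=k$. The upper bound: $k$ cops suffice because one cop plays the winning strategy on the induced copy of $G_0$ — the robber, to avoid capture forever, must eventually stay inside $H\cup\{w\}$, but $w$ is adjacent to all of $H$, so a single cop moving onto $w$ then captures on the next move; so $\max(c(G_0), \text{something} )$ cops win, and a short case analysis shows $k$ suffice. The lower bound $c(G)\ge k$: $G_0$ is an induced subgraph and, crucially, it is a \emph{retract} (or at least the robber can be confined to it) — one shows that $k-1$ cops cannot win on $G$ because the robber can mimic his winning evasion strategy on $G_0$, using the fact that any cop outside $V(G_0)$ is "far" in the sense that it must pass through $w$ and then through a neighbor to threaten $G_0$.

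The main obstacle is the lower bound $c(G)\ge k$, i.e. showing the added universal vertex $w$ and the pendant blob $H$ do not \emph{help} the cops catch a robber who intends to live in $G_0$. The subtlety is that $w$, being universal, is adjacent to every vertex of $G_0$, so a cop on $w$ threatens all of $G_0$ simultaneously; one must verify that this single "global threat" vertex is worth at most one cop, so that $k-1$ cops total still cannot win (they effectively get at best $k-2$ cops playing inside $G_0$ plus one on $w$, and $k-1<k$). The clean way is: given a strategy for $k-1$ cops on $G$, convert it to a strategy for $k-1$ cops on the graph $G_0^+$ obtained from $G_0$ by adding a single universal vertex; if one can show $c(G_0^+)\le c(G_0)+1$ would be the wrong direction — instead one wants that confining the robber to $V(G_0)\cup\{w\}$ and projecting cop positions (any cop in $V(H)$ gets mapped to $w$) gives a valid $(k-1)$-cop strategy on $G_0$ with at most... here one needs $c(G_0)\ge k$ to derive the contradiction. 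I would handle this by a projection/image-strategy argument: map each cop position to $V(G_0)$ via the identity on $V(G_0)$ and sending everything in $V(H)\cup\{w\}$ to a fixed vertex $v_0\in V(G_0)$ adjacent to... this doesn't quite work since $w$ has no canonical image, so the correct formalization is to observe that a robber confined to $V(G_0)$ is captured in $G$ by $k-1$ cops only if he is captured in $G_0$ by $k-1$ cops where one "cop" is replaced by the omniscient vertex $w$ counted as a single pursuer — and a universal vertex added to $G_0$ raises the cop number by at most $1$, hence $c(G_0)\le (k-2)+1=k-1<k$, contradiction. Making this "universal vertex costs one cop" claim precise — really the statement $c(G_0) \le c(G_0 \text{ with a universal vertex deleted-effect})$ — is the one genuinely fiddly lemma, and I expect the paper either proves it inline or cites it; modulo that, injectivity of $H\mapsto G$ (recover $H$ by deleting $w$ and the component-structure of $G_0$) and connectivity are routine, giving $g(n-m_k-1)\le f_k(n)$.
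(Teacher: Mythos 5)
Part (1) of your proposal is correct and is exactly the paper's argument. Part (2), however, is built on a construction that does not work. You make the new vertex $w$ adjacent to \emph{all} of $V(H)\cup V(G_0)$, i.e.\ universal in $G$. A graph with a universal vertex is cop-win: a single cop starts on $w$, the robber must occupy some vertex $r\neq w$, and since $w\sim r$ the cop captures on its very next move. So your $G$ satisfies $c(G)=1$, not $c(G)=k$, and the graphs you produce are counted by $f_1(n)$ rather than $f_k(n)$. Your third paragraph correctly senses that the cop on $w$ ``threatens all of $G_0$ simultaneously,'' but the proposed rescue --- that a universal vertex is ``worth at most one cop'' --- fails in the direction you need: adding a universal vertex does not raise the cop number by at most one while preserving a lower bound; it collapses the cop number to $1$ outright. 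No projection or retract argument can recover a lower bound of $k$ from this construction.

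The missing idea is to attach the gadget by a \emph{bridge} rather than completely. The paper forms $G'$ from $H$ by adding a vertex $x_G$ universal in $G'$ only, fixes a $k$-cop-win graph $H_G$ of order $m_k$ with a designated vertex $y_G$, and adds the single edge $x_Gy_G$. Because $H_G$ is hung off a cut edge, fewer than $k$ cops cannot catch a robber who stays in $H_G$ (any cop outside $H_G$ can be treated as sitting at $y_G$, and $c(H_G)=k$), giving $c(G^{+k})\geq k$; conversely $k$ cops win by parking one on $x_G$, which dominates all of $G'$ and so confines the robber to $H_G$, where the full cop squad then executes a winning strategy (playing against the image $y_G$ if the robber ever leaves). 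Injectivity on isomorphism types is then read off from the degree sequence: $x_G$ is the unique vertex of maximum degree $n-m_k$ (this is where $n>2m_k$ is used, since $\deg(y_G)\leq m_k<n-m_k$), $y_G$ is the unique neighbor of $x_G$ with neighbors outside $N[x_G]$, and stripping $x_G$ and $H_G$ recovers $H$. You should replace your universal-join by this bridge construction; the rest of your outline (connectivity, counting) then goes through.
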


\begin{proof}
For item (1), fix a graph $G$ of order $n-1.$ Form $G^{\prime }$ by adding a
universal vertex to $G.$ If $G\ncong H,$ then it is an exercise to show that $G^{\prime }\ncong
H^{\prime }.$ Item (1) now
follows since $G^{\prime }$ is cop-win.

For item (2), given $G$ of order $n-m_{k}-1,$ form a graph $G^{+k}$ as
follows. First form $G^{\prime }$ with the new universal vertex labelled $x_G.$ Fix a $k$-cop-win graph $H$ of order $m_k$
(which we label as $H_{G}),$ and specify a fixed vertex $y_{G}$ of $H_{G}.$
Add the bridge $x_{G}y_{G}$ connecting $H_{G}$ to $G^{\prime }.$

We first claim that $G^{+k}$ is $k$-cop-win. We have that $c(G^{+k})\geq k,$
since a winning strategy for the robber if there are fewer than $k$ cops is
to remain in $H_{G}.$ To show that $c(G^{+k})\leq k,$ a set of $k$ cops
plays as follows. At the beginning of the game, one cop is on $x_G,$ while the
remaining cops stay in $G.$ Then $R$ cannot move to $G^{\prime }$ without
being caught, so the robber moves in $H_{G}.$ All the cops then move to $%
H_{G}$ and play their winning strategy there, with the following caveat. If $%
R$ moves outside $H_{G},$ then the cops play as if $R$ is on $y_G.$
Eventually, the robber is caught in $H_{G},$ or the robber is in $G^{\prime }
$ and at least one cop occupies $y_G.$ But then that cop moves to $x_G$ to win.

To finish the proof of (2), we must show that if $G\ncong J,$ then $%
G^{+k}\ncong J^{+k}.$ For a contradiction, let $h:G^{+k}\rightarrow J^{+k}$
be an isomorphism. Then we must have $h(x_{G})=x_{J}$ by noting that
$x_G$ and $x_J$ are the only vertices with the maximum degree $n-m_k$ (note that $y_G$ has degree at most $m_k < n -m_k$ by hypothesis). The vertex $y_{G}$ is unique with the property that it is adjacent to $x_{G}$ and has neighbors not adjacent to $x_{G}$ (the same holds by
replacing the subscript $G$ by $J).$ But then $h(H_{G})=H_{J},$ which
implies the contradiction that restricted mapping $h\upharpoonright G: G\rightarrow J$ is an
isomorphism.
\end{proof}

We do not know the asymptotic order for $f_k$ (even if $k=1$). A
recent result~\cite{bkp} proves that the number of distinct \emph{labelled} cop-win graphs is $2^{\frac{1}{2}n^2-\frac{1}{2}n+o(n)}.$

\section{Proofs of Theorems~\ref{thm:9-vtx-c2} and \ref{thm:petersen}}\label{sec:petersen}

We now proceed to the proofs of Theorems \ref{thm:9-vtx-c2} and \ref{thm:petersen}, but first introduce notation for the state of the game. We fix a
connected graph $G$ on which the game is played. The state of the game
is a pair $\st{\bc}{r}$, where $G$ is a connected graph, $\bc$ is a
$k$-tuple of vertices $\bc = (c_1, c_2, \dots, c_k)$, where $c_i\in
V(G)$ is the current position of cop $C_i$, and $r\in V(G)$ is the
current position of the robber $R$.  For notational convenience, we
write $\st{c_1, \dots, c_k}{r}$ for $\st{(c_1, \dots, c_k)}{r}$. When
we need to specify whose turn it is to act, we underline the position
of the player whose turn it is: $\copst{\bc}{r}$ denotes that it is
the cops' turn to move, and $\robst{\bc}{r}$ the robber's.

We use a shorthand notation to describe moves: $\copst{c_1, \dots,
  c_k}{r}\copmove \robst{c'_1, \dots, c'_k}{r}$ denotes the cop
move where each $C_i$ moves from $c_i$ to $c'_i$. Similarly
$\robst{c_1, \dots, c_k}{r}\robmove \copst{c_1, \dots, c_k}{r'}$
denotes the robber's move from $r$ to $r'$. We will concatenate moves
and we use the shorthand $\crmove$, meaning a cop move followed by a
robber move:
$$\copst{c_1, \dots, c_k}{r}\crmove \copst{c'_1, \dots, c'_k}{r'}$$
is equivalent to
$$\copst{c_1, \dots, c_k}{r}\copmove
\robst{c'_1, \dots, c'_k}{r}\robmove\copst{c'_1, \dots,
  c'_k}{r'}. $$
There will be cases where the strategy allows for
either the robber or the cops to be in one of several positions. In
general, for $T_i\subseteq V$, $S\subseteq V$, the state of the game
has the form $\st{T_1, \dots, T_k}{S}$ means that $c_i\in T_i$, and
$r\in S$.

The robber's \emph{safe neighborhood}, denoted $S(R)$, is the
connected component of $G-N[C]$
containing the robber. We say that the robber is \emph{trapped} when
$S(R) = \emptyset$. This condition is equivalent to having both $r \in
N(\bc)$ and $N(r) \subseteq N[\bc]$. Once the robber is trapped, he will
be caught on the subsequent cop move, regardless of the robber's next
action. When the robber is trapped, we are in a \emph{cop-winning position}, denoted by
$\copwin$.

\subsection{The end game}

We frequently use the following facts to identify cop-win
strategies for two cops in the end game.
We state a more general version of these results
for $k$ cops.

We need the following property arising in the study of cop-win graphs, which first appears in \cite{cn}. Every cop-win graph has at least one winning \emph{no-backtrack strategy} for the cop, meaning a winning strategy where the cop never repeats a vertex during the pursuit. Typically, a graph has multiple winning no-backtrack strategies. We say that a vertex $v$ is \emph{no-backtrack-winning} if there is a winning no-backtrack strategy for the cop starting at $v$. For example, when $G$ is a tree, every vertex is no-backtrack-winning.

Next we fix some notation. For a set $U = \{ u_1, \dots, u_t \}
\subseteq V$, let $N_U'(u_j) = N(u_j) \setminus N[U \setminus u_{j} ]$ be the
neighbors of $u_j$ that are not adjacent to any other vertex in $U$.

\begin{lemma}\label{obs:key}
Let $\copst{\bc}{r}$ be the state of the game.  Suppose that  there exists a $c_j \in \bc$ such that either
\begin{inparaenum}[\upshape(\itshape a\upshape)]
\item \label{key:empty} $[S(R): N'_C(c_j)] = \emptyset$ and $G[S(R)]$ is cop-win; or
\item \label{key:one}
$N(S(R)) \cap   N'_C(c_j) = \{ v \}$ such that $H=G[S(R) \cup \{v\}]$ is cop-win and $v$
is no-backtrack-winning in $H$.
\end{inparaenum}
  Then the cops can win from this configuration.
\end{lemma}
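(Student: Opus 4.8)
The plan is to treat the two cases separately, in both of which we hand one cop (namely $C_j$) the job of "sealing off" the robber's safe neighborhood $S(R)$ while the remaining cops execute a cop-win strategy on a small subgraph. The central observation driving both cases is that $S(R)$ is, by definition, a connected component of $G - N[C]$, so as long as the cops in $C \setminus \{c_j\}$ never leave the vertices of $N[C]$ that they currently dominate, the robber can never enlarge $S(R)$: any move he makes keeps him inside $S(R)$, or inside $S(R) \cup \{v\}$ in case (b). So the whole argument reduces to: keep the perimeter frozen, and win the game played inside.

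For case (a): since $[S(R) : N'_C(c_j)] = \emptyset$, every neighbor of $S(R)$ that lies in $N[C]$ is in fact dominated (as a separating vertex) by some $c_i$ with $i \neq j$; equivalently, the robber can never step onto a vertex adjacent to $N'_C(c_j)$ without that vertex already being in $N[C \setminus c_j]$. I would have the cops $C_i$, $i \neq j$, simply hold their positions (using loops to pass), which keeps $N[C \setminus c_j]$ fixed and hence keeps the robber confined to $S(R)$ forever. Meanwhile $C_j$ is now free: move $C_j$ into $G[S(R)]$ and, since $G[S(R)]$ is cop-win, play a one-cop winning strategy there. One must check that while $C_j$ walks toward $S(R)$ the robber still cannot escape — but he cannot, because escaping would require crossing $N[C \setminus c_j]$, which is frozen, and $C_j$ only ever occupies vertices outside that frozen set or inside $S(R)$. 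Formally one argues by induction on the length of the $C_j$ cop-win strategy in $G[S(R)]$ that the state remains of the form $\st{c_1,\dots,c_k}{S(R)}$ with the $C_i$ frozen, until capture.

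For case (b): here the single vertex $v \in N(S(R)) \cap N'_C(c_j)$ is the one "door" through which the robber might pass, and the relevant subgraph is $H = G[S(R) \cup \{v\}]$. The idea is to move $C_j$ onto $v$ first — again the other cops freeze $N[C \setminus c_j]$, so while $C_j$ travels the robber stays in $S(R)$ — and then play the no-backtrack cop-win strategy for $H$ starting from $v$. The point of demanding that $v$ be \emph{no-backtrack-winning} in $H$ is exactly that $C_j$'s strategy, executed from $v$, never returns to $v$; this matters because once $C_j$ leaves $v$ it is no longer blocking the door, but the no-backtrack property guarantees $C_j$ has already "chased the robber off" $v$ in the sense that the strategy commits to a side and never needs to come back. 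One should verify that the robber, confined to $S(R) \cup \{v\}$ (he cannot leave because the rest of the perimeter is frozen and the only other boundary vertex is $v$, which is $C_j$'s concern), is exactly the robber in the game on $H$ against one cop following the no-backtrack strategy from $v$, so capture in $H$ yields capture in $G$.

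The main obstacle is the bookkeeping in case (b): making precise that freezing the cops $C_i$, $i \neq j$, really does confine the robber to $S(R) \cup \{v\}$ for the entire duration — including the initial phase while $C_j$ is still en route to $v$ and the door is open — and that the no-backtrack hypothesis is both necessary and sufficient for $C_j$ alone to finish the job in $H$ without the door ever being "reopened" in a way the robber can exploit. I expect this to come down to a careful induction on rounds, tracking the invariant that the robber's position lies in $S(R) \cup \{v\}$ and that whenever $C_j$ is not on $v$, the portion of $H$ on the robber's side of $C_j$'s current position has shrunk, which is precisely what a no-backtrack strategy delivers.

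\medskip
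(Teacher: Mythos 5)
Your proposal is correct and follows essentially the same strategy as the paper's proof: freeze the cops $C_i$, $i \neq j$, so that the frozen perimeter confines the robber to $S(R)$ (respectively $S(R)\cup\{v\}$), while $C_j$ enters $G[S(R)]$ and plays a cop-win strategy in case (a), or occupies $v$ and plays the no-backtrack strategy on $H$ in case (b). Your added bookkeeping about the confinement invariant only makes explicit what the paper asserts in one line.
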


\begin{proof}
Let $S=S(R)$ be the initial safe neighborhood of $R$. In both cases, only cop $C_j$ is active, while the others remain stationary.   In case (a), cop $C_j$ moves into $S$ and follows a cop-win strategy on $G[S]$.
In case (b),  cop $C_j$ moves  to $v$ and then follows a no-backtrack strategy on $G[S \cup \{v\}]$.  This prevents the robber from ever reaching $v$. In both cases, the only way for the robber to avoid capture by $C_j$ is to move into the neighborhood of the remaining cops.
\end{proof}

We highlight two useful consequences that are  used heavily for $k=2$ in subsequent proofs.

\begin{corollary}\label{cor:small-safe-nbhd}
Let $\copst{\bc}{r}$ be the state of the game, played with $k \geq 2$
cops. If $|S(R)|\leq 2$ and $|N(S(R))|\leq 2k-1$, then the cops can
win.
\end{corollary}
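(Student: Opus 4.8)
The plan is to reduce Corollary~\ref{cor:small-safe-nbhd} to Lemma~\ref{obs:key} by a careful case analysis on the size of $S(R)$ and on how many of the remaining $k-1$ cops we can afford to leave stationary. Write $S = S(R)$ and suppose $|S| \le 2$ and $|N(S)| \le 2k-1$. The key observation is a pigeonhole count: since $N(S) \subseteq N[\bc] \setminus S$ and the robber's safe component is separated from the rest of the graph by $N[\bc]$, every vertex of $N(S)$ lies in $N(c_i)$ for some cop $c_i$. First I would show we can find a single cop $c_j$ whose ``private'' neighborhood $N'_C(c_j)$ meets $N(S)$ in at most one vertex. Indeed, distribute the (at most $2k-1$) vertices of $N(S)$ among the $k$ cops; one can argue that either some cop sees none of $N(S)$, or — after possibly moving $k-1$ of the cops for a single step to consolidate coverage — there is a cop $c_j$ with $|N(S) \cap N'_C(c_j)| \le 1$, with the remaining cops covering the rest of $N(S)$ so that the robber still cannot escape through them.

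Next I would verify the cop-win hypothesis of Lemma~\ref{obs:key}. Since $|S| \le 2$, the graph $G[S]$ is either a single vertex or an edge (recall loops are present, and $S$ is connected), both of which are trivially cop-win; likewise $H = G[S \cup \{v\}]$ has at most $3$ vertices and is connected, hence cop-win, and in such a small graph every vertex is no-backtrack-winning (for instance, a vertex of a path or triangle admits an obvious no-backtrack pursuit). So whichever of case~(\ref{key:empty}) or case~(\ref{key:one}) we land in, the structural requirement is automatic. Applying Lemma~\ref{obs:key} with cop $C_j$ active and the others holding their (possibly adjusted) positions then finishes the argument: the robber is either caught by $C_j$ inside $S \cup \{v\}$, or is forced into $N[\bc \setminus c_j]$ and caught by another cop.

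The main obstacle I expect is the bookkeeping in the pigeonhole step — specifically, arguing that the $k-1$ non-active cops can be positioned so that they simultaneously (i) continue to block every vertex of $N(S) \setminus \{v\}$ and (ii) do not themselves open a new escape route for the robber while $C_j$ carries out its pursuit. The bound $2k-1$ is exactly what makes this work: $k-1$ cops can ``cover'' up to $2(k-1) = 2k-2$ boundary vertices in the relevant sense, leaving at most one vertex for the private neighborhood of $C_j$, which is precisely the ``$=\{v\}$'' situation of case~(\ref{key:one}); and if $|N(S)| \le 2k-2$ we can instead arrange $[S : N'_C(c_j)] = \emptyset$ and land in case~(\ref{key:empty}). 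I would treat the subcases $|S|=1$ and $|S|=2$ separately if needed, since when $|S|=2$ the two vertices of $S$ may have overlapping neighborhoods in $N(S)$, which only helps the counting.
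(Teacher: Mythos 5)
Your proposal is essentially the paper's argument: pigeonhole on the private neighborhoods $N'_C(c_j)$ to find a cop with at most one private boundary vertex, then invoke Lemma~\ref{obs:key}, noting that $G[S]$ and $G[S\cup\{v\}]$ are tiny connected graphs that are cop-win with every vertex no-backtrack-winning. The one point you flag as an ``obstacle'' is in fact immediate and requires no cop movement: the sets $N'_C(c_1),\dots,N'_C(c_k)$ are pairwise disjoint by definition (a vertex in $N'_C(c_j)$ is adjacent to no other cop), and $N(S)\subseteq N[\bc]$, so $\sum_{j}|N(S)\cap N'_C(c_j)|\leq|N(S)|\leq 2k-1<2k$, forcing some $j$ with $|N(S)\cap N'_C(c_j)|\leq 1$ already at the current positions. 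Dropping the proposed ``consolidation'' step is not just cosmetic --- repositioning $k-1$ cops first would give the robber an extra move and could change $S(R)$, so the direct disjointness count is the right (and the paper's) way to close the argument.
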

\begin{proof}
Let $S=S(R)$.
  We have  $|N(S)\cap
  N(\bc)|\leq 2k-1$, so the pigeonhole principle ensures that  there exists a cop
  $C_j$ such that $|N(S)\cap N'_C(c_j)|\leq 1$. We are done by Lemma~\ref{obs:key}, since every vertex of a connected $2$-vertex graph is non-backtrack-winning.
%  If $|S : N'_C(c_j)| \leq 1$ then
%  conditions of Observation~\ref{obs:key}(\ref{key:one}) are satisfied.
%    Otherwise, let $v$ be the unique vertex in $N(S)\cap N'_C(c_j)$. If $v$ is adjacent to all 3 vertices in $S$, then moving $C_j$ to $v$ traps the robber.
%
%  Suppose that $v$ is adjacent to 2 of the 3 vertices in $S$. Once again, we move $C_j$ to $v$. The robber is trapped unless $G[S]$ is a path $u_1, u_2, u_3$ with $v \sim u_1$ and $v \sim u_2$. In this case, the robber must respond by moving to (or staying at) $u_2$. If $\deg_G(u_2)=2$, then $C_j$ stays at $v$ while another cop moves in to capture the robber.
%  Otherwise $u_2$ is adjacent to $w \in N(c_i)$ for some $i$. With the remaining cops holding their positions, the game ends as
%  $\copst{v,c_i}{u_2}  \copmove \robst{u_1, c_i}{ u_2} \robmove \copst{u_1, c_i}{u_3}
%  \copmove \robst{v, w}{u_3} \copwin.$
\end{proof}

\begin{corollary}\label{cor:one-deg-3}
Let $\copst{\bc}{r}$ be the state of the game, played with $k \geq 2$ cops. If
$$\max_{v \in S(R)} \deg_G(v) \leq 3$$ and $S(R)$
 contains at most one vertex of degree $3$, then the cops can win. \endproof
\end{corollary}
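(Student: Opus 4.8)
The plan is to determine the shape of $G[S(R)]$ and then finish by invoking Corollary~\ref{cor:small-safe-nbhd} or Lemma~\ref{obs:key}, with an explicit pursuit argument covering the one family of configurations those tools miss. Write $S = S(R)$. Since $G[S]$ is connected and, by hypothesis, has maximum degree at most $3$ with at most one vertex of degree $3$, double counting gives $|S:N(S)| = \sum_{v\in S}\deg_G(v) - 2\,|E(G[S])| \le (2|S|+1) - 2(|S|-1) = 3$, so in particular $|N(S)| \le 3 \le 2k-1$. If $|S| \le 2$ we are done by Corollary~\ref{cor:small-safe-nbhd}, so assume $|S| \ge 3$ from now on.

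A connected graph with maximum degree at most $3$ and at most one vertex of degree $3$ is a path, a cycle, a spider (three paths glued at a common endpoint), or a \emph{tadpole} (a cycle with one pendant path attached); among these, exactly the paths, the spiders, the triangle, and the tadpoles whose cycle is a triangle are cop-win. If $G[S]$ is a cycle or a tadpole, one checks that exactly one edge leaves $S$ (using the bound on the number of degree-$3$ vertices together with connectivity of $G$), so $|N(S)| = 1$; in particular, whenever $G[S]$ is not cop-win we have $|N(S)| = 1$.

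Suppose first that $G[S]$ is cop-win. Write $N(S) = \{v_1, \dots, v_t\}$ with $v_i$ having $e_i \ge 1$ neighbours in $S$, so $\sum_i e_i \le 3$ and $t \le 3$. If some cop $C_j$ has $N(S) \cap N'_C(c_j) = \emptyset$, then $[S : N'_C(c_j)] = \emptyset$ and Lemma~\ref{obs:key}(a) finishes, since a path, spider, or triangle is cop-win. Otherwise every cop privately owns some $v_i$, so $t \ge k \ge 2$; then $\sum_i e_i \le 3$ with at least two positive terms forces a privately owned $v_{i_0}$ with $e_{i_0} = 1$, and a short case check (using $t \le 3$) lets us take its owner $C_{j_0}$ to own no other $v_i$, so $N(S) \cap N'_C(c_{j_0}) = \{v_{i_0}\}$; then $G[S \cup \{v_{i_0}\}]$ is $G[S]$ with a single pendant vertex attached, which is a tree when $G[S]$ is a path or spider and is cop-win with $v_{i_0}$ no-backtrack-winning in the triangle cases, so Lemma~\ref{obs:key}(b) finishes. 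Suppose instead that $G[S]$ is not cop-win; then $G[S]$ is a cycle $C_m$, or a tadpole with cycle $C_m$, with $m \ge 4$, and $N(S) = \{v\}$. Let $b$ be the cycle vertex nearest $v$. Here I argue directly: a cop $C_1$ adjacent to $v$ moves onto $v$ and then walks along the (possibly empty) pendant path from $v$ to $b$; this pushes the robber onto $C_m$ and keeps him confined there. Parking $C_1$ on $b$ then effectively cuts $C_m$ into a path, which a second cop $C_2$ --- available because $k \ge 2$, and brought to $b$ the same way --- sweeps to capture the robber. (Equivalently, once $C_1$ is on $b$ the robber's safe neighbourhood is an arc of $C_m$, a path whose boundary vertices are all private to $C_1$, so Lemma~\ref{obs:key}(a) applies to cop $C_i$ for any $i \ne 1$.)

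I expect the non-cop-win case to be the main obstacle: it is the only configuration not handled out of the box by Corollary~\ref{cor:small-safe-nbhd} or Lemma~\ref{obs:key}, and it needs its own elementary-but-fussy pursuit, including the verification that $C_1$ can be walked all the way to $b$ without the robber slipping out through $v$, and that $C_2$ can then be brought in. A secondary subtlety is that in the cop-win case a private exit vertex adjacent to several vertices of $S$ could make $G[S \cup \{v\}]$ fail to be cop-win, which is why the counting argument must be arranged to produce a private exit with exactly one edge into $S$.
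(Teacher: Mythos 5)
Your proof is correct and follows essentially the same route as the paper's: bound $|[S:N(\bc)]|\le 3$ by the degree/connectivity count, dispatch the cop-win (tree-like) shapes via Lemma~\ref{obs:key} after pigeonholing a cop with at most one private exit edge, and handle the remaining cycle/tadpole shapes by noting $|N(S)|=1$ and marching a cop to the unique cut vertex so that $S(R)$ becomes a path. The paper's version is just terser: it pigeonholes on edges ($|S:N'_C(c_j)|\le 1$ for some $j$), which yields the "private exit with exactly one edge into $S$" property immediately rather than via your vertex-by-vertex case check, and it folds the triangle cases into the unicyclic branch instead of the cop-win branch.
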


\begin{proof}
Let $S=S(R)$. Since $G[S]$ is connected, we have $[S : N(C)] \leq 3$. Therefore, some cop $C_j$ has $|S: N'_C(c_j)| \leq 1$. If $G[S]$ is a tree, then we are done by Lemma~\ref{obs:key}.
If $G[S]$ is not a tree, then $G[S]$ must be unicyclic with one degree 3 vertex, say $u$. Therefore,
$|S: N(C)|=1$, and except for $u$, every vertex in the cycle has degree 2 in $G$. A winning strategy for the cops is as follows:  two cops move until they both reach $u$. Now $S(R)$ is a path, so  Lemma~\ref{obs:key} completes the proof.
\end{proof}

%%%%%%%%%%%%%%%%%%%%%%%%%%%%%%%%%%%%%%%%%%%%%%%%%%%%%%%%%%%%%%%%%%%%%%%%%%%%%%%%%%%%%
%%%%%%%%%%%%%%%%%%%%%%%%%%%%%%%%%%%%%%%%%%%%%%%%%%%%%%%%%%%%%%%%%%%%%%%%%%%%%%%%%%%%%
%%%%%%%%%%%%%%%%%%%%%%%%%%%%%%%%%%%%%%%%%%%%%%%%%%%%%%%%%%%%%%%%%%%%%%%%%%%%%%%%%%%%%

\subsection{Graphs with $\Delta(G) \geq n-6$}

In this section, we prove  Theorem \ref{thm:9-vtx-c2}. We also make progress on the proof of Theorem \ref{thm:petersen} by showing that if $v(G)=10$ and $\Delta(G)=4$, then $c(G)\leq 2$. For convenience, we recall the statements of these results prior to their respective proofs.

\begin{lemma}\label{lemma:gen-5-cycle}
Let $G$ be a graph on $n$ vertices. If there is a vertex $u\in
V(G)$ of degree at least $n-6$, then either $c(G)\leq 2$ or the induced
subgraph $G[V\setminus N[u]]$ is a 5-cycle.
\end{lemma}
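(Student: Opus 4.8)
The plan is to place one cop on the high-degree vertex $u$ and argue that, unless $G[V \setminus N[u]]$ is a $5$-cycle, the second cop can finish off the robber using the end-game machinery (Lemma~\ref{obs:key} and its corollaries). Let $W = V \setminus N[u]$, so $|W| \leq 5$ by the degree hypothesis. Station cop $C_1$ permanently on $u$. Then $N[u]$ is entirely dominated, so once $C_1$ is in place the robber can never occupy or step into $N[u]$ without being captured; hence the robber is confined to $W$, and in fact $S(R) \subseteq G[W]$ from that point on, with $N(S(R)) \setminus S(R) \subseteq N(u)$ being controlled only by $C_2$.

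The heart of the argument is then a case analysis on the induced graph $G[W]$ on at most five vertices. First, if $|W| \leq 4$, I would like to show the robber is caught outright: the safe neighborhood $S(R)$ is a connected subgraph of $G[W]$ of order at most $4$. When $|S(R)| \leq 2$ we invoke Corollary~\ref{cor:small-safe-nbhd} (with $k=2$, so we need $|N(S(R))| \leq 3$), and when $S(R)$ is larger we use the fact that $G[S(R)]$ is small — any connected graph on at most $3$ vertices is cop-win with a no-backtrack strategy, and a connected $4$-vertex graph is cop-win — so that part (a) or (b) of Lemma~\ref{obs:key} applies as soon as $C_2$ can control the (at most few) vertices of $N(S(R)) \cap N(u)$; a short sub-analysis handles the exceptional shapes where $|N(S(R))|$ is too large, but with only $C_2$'s single "private" neighborhood to worry about and $|W|\le 4$, the pigeonhole bound $|N(S(R)) \cap N'_C(c_2)| \leq 1$ forces a win. (Degree-based corollaries like Corollary~\ref{cor:one-deg-3} can shortcut several of these subcases, since vertices of $W$ adjacent to $u$ lose a degree "inside" $W$.)

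Next, the case $|W| = 5$. If $G[W]$ is \emph{not} the $5$-cycle, I will show $c(G) \leq 2$ by the same scheme. A connected graph on $5$ vertices that is not $C_5$ is cop-win (indeed every graph on at most $5$ vertices except $C_5$ is cop-win, which is exactly the content of the $f_3(5)=0$ entry in the table and is classical), and more importantly one can always pick the cop-win / no-backtrack starting vertex to be a vertex adjacent to $u$, so that Lemma~\ref{obs:key}(b) applies with that vertex $v$; alternatively, if the robber's safe region is a proper connected subgraph of $G[W]$ (which happens whenever the robber does not actually sit in a component spanning all of $W$), it has order at most $4$ and we reduce to the previous paragraph. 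The one configuration in which this breaks down is when $G[W] \cong C_5$: then $S(R)$ can be all of $C_5$, which is not cop-win, and no single extra cop suffices — this is precisely the escape hatch in the statement. The disconnected-$G[W]$ possibilities are even easier, since then each component has order at most $4$.

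The main obstacle I anticipate is not any single case but the bookkeeping needed to guarantee that $C_2$ can always "control" the small set $N(S(R)) \cap N(u)$ — i.e., that after placing $C_1$ on $u$, the cops can maneuver $C_2$ to the designated no-backtrack-winning vertex of $G[S(R) \cup \{v\}]$ before the robber exploits the transition. Here one uses that $C_1$'s presence on $u$ already freezes the robber inside $W$, so $C_2$ has free rein to walk (through $N[u]$) to any chosen vertex of $N(u)$ while the robber merely shuffles within the tiny graph $G[W]$; making this "the robber cannot escape during $C_2$'s approach" precise, uniformly across the handful of shapes of $G[W]$, is the part that requires care. A secondary nuisance is enumerating the connected graphs on $5$ vertices finely enough to check that a suitable adjacent-to-$u$ no-backtrack start vertex always exists when $G[W] \not\cong C_5$; but this is a finite, mechanical check.
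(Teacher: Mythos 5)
Your proposal has a fatal gap: it rests on the claim that every connected graph on at most $5$ vertices other than $C_5$ is cop-win, and that every connected $4$-vertex graph is cop-win. Both claims are false, and the paper's own table contradicts them: $f_2(4)=1$ (witnessed by $C_4$) and $f_2(5)=5$, so besides $C_5$ there are four more connected $5$-vertex graphs with cop number $2$ (e.g.\ $C_4$ with a pendant vertex, or $K_{2,3}$ --- any $5$-vertex graph containing an induced $4$-cycle that is not dominated after dismantling). You have conflated the entry $f_3(5)=0$ (no $5$-vertex graph needs three cops) with the much stronger and false statement $f_2(5)=1$. Consequently your reduction ``park $C_1$ at $u$, then $G[S(R)]$ is cop-win so Lemma~\ref{obs:key}(a)/(b) finishes'' collapses exactly in the cases that constitute the entire content of the lemma: when $H=G[V\setminus N[u]]$ contains an induced $4$-cycle. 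There a robber confined to $H$ can simply rotate around the induced $C_4$ forever against a single pursuing cop, unless some vertex of $N(u)$ happens to dominate three consecutive cycle vertices; so the second cop alone cannot ``clean up'' $H$, and the two cops must genuinely cooperate, with the cop at $u$ sometimes descending into $N(u)$.

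The paper's proof is precisely a case analysis of these non-cop-win configurations: it classifies the five ways a fifth vertex can attach to the induced $4$-cycle (Figure~\ref{fig:4-cycle-cases}) and then, via Claims~1--4, analyzes how vertices of $N(u)$ attach to the cycle, designing coordinated two-cop maneuvers (in which $C_1$ does \emph{not} stay on $u$ throughout) that trap the robber. None of this is addressed by your proposal; the ``short sub-analysis'' and ``finite, mechanical check'' you defer are in fact the whole proof. To repair the argument you would need to drop the false cop-win claim and supply explicit two-cop strategies for every induced-$C_4$ configuration of $H$ together with every adjacency pattern between $N(u)$ and $H$.
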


\begin{corollary}
\label{cor:deg=n-5}
If $\Delta(G) \geq n-5$, then $c(G) \leq 2$. \endproof
\end{corollary}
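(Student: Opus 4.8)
The plan is to reduce immediately to Lemma~\ref{lemma:gen-5-cycle}. Let $u\in V(G)$ be a vertex of maximum degree, so that $\deg_G(u)=\Delta(G)\ge n-5\ge n-6$, and in particular $u$ satisfies the hypothesis of Lemma~\ref{lemma:gen-5-cycle}. Applying that lemma to $u$, we conclude that either $c(G)\le 2$, in which case there is nothing left to prove, or else the induced subgraph $G[V\setminus N[u]]$ is a $5$-cycle.

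The second alternative is ruled out by an elementary counting argument. Since $|N[u]|=\deg_G(u)+1\ge n-4$, we have $|V\setminus N[u]|=n-|N[u]|\le 4$. A $5$-cycle has five vertices, so $G[V\setminus N[u]]$ is too small to be one. Hence only the first alternative of Lemma~\ref{lemma:gen-5-cycle} can hold, and $c(G)\le 2$, as desired.

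I do not anticipate any genuine obstacle here: the substantive work is entirely contained in Lemma~\ref{lemma:gen-5-cycle}, and Corollary~\ref{cor:deg=n-5} follows because the stronger degree bound $\Delta(G)\ge n-5$ (versus $\Delta(G)\ge n-6$ in the lemma) shrinks the complement of the closed neighborhood enough to forbid the lone exceptional configuration. The only point to verify carefully is the cardinality inequality $|V\setminus N[u]|\le 4$, which is immediate once $u$ is chosen to have maximum degree.
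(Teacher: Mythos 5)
Your proof is correct and matches the paper's intent exactly: the corollary is stated with an empty proof precisely because, when $\deg(u)\ge n-5$, the set $V\setminus N[u]$ has at most $4$ vertices and so cannot induce a $5$-cycle, leaving only the first alternative of Lemma~\ref{lemma:gen-5-cycle}. Nothing further is needed.
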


Lemma \ref{lemma:gen-5-cycle} and its immediate corollary are crucial
tools in proving the main results. In particular,
Theorem~\ref{thm:9-vtx-c2} is a quick consequence of Corollary
\ref{cor:deg=n-5}. This reduces the search to 10 vertex graphs with $2
\leq \delta(G) \leq \Delta(G) \leq 4$.

\begin{proof}[Proof of Lemma~\ref{lemma:gen-5-cycle}]
Let $H=G[V\setminus N[u]]$. By Lemma~\ref{obs:key}(\ref{key:empty}), if
$H$ is cop-win, then $c(G)\leq 2$. In particular this holds if $H$
does not contain an induced cycle of length at least 4. So we only
need to consider the case where $v(H) = 4$ or $5$, and $H$ contains an
induced $4$-cycle.  Let $x_1, x_2, x_3, x_4$ form the 4-cycle in $H$
(in that order). Let $x_5$ be the additional vertex (if present).

We now distinguish some cases based on $N(x_5)\cap H$. If $x_5\sim x_i$
for every $i\in\{1, 2, 3, 4\}$, then $H$ is cop-win, and hence,
$c(G)\leq 2$. We therefore have 5 cases to consider, depicted in
Figure~\ref{fig:4-cycle-cases}. Case (a) includes the situation when
$\deg(u)=n-5$, and there is no vertex $x_5$.

\begin{figure}[ht]
\begin{center}
\begin{tabular}{ccccc}

%%%%%%%%%%%%%%%%
% FIRST PICTURE
\begin{tikzpicture}[scale=.85]

\path(0,0) coordinate (X3);
\path(1,1) coordinate (X2);
\path(1,-1) coordinate (X4);
\path(2,0) coordinate (X1);
\path(1,0) coordinate (X5);

\foreach \i in {1,2,3,4,5}
{
\draw[fill] (X\i) circle (2pt);
}

\node at (2.25,-.25) {$x_1$};
\node[above] at (X2) {$x_2$};
\node at (-.2,-.25) {$x_3$};
\node[below] at (X4) {$x_4$};
\node at (.75,-.25) {$x_5$};

\draw (X1) -- (X2) -- (X3) -- (X4) -- (X1);

\end{tikzpicture}

&
%%%%%%%%%%%%%%%%%%%
%% SECOND PICTURE
\begin{tikzpicture}[scale=.85]

\path(0,0) coordinate (X3);
\path(1,1) coordinate (X2);
\path(1,-1) coordinate (X4);
\path(2,0) coordinate (X1);
\path(1,0) coordinate (X5);

\foreach \i in {1,2,3,4,5}
{
\draw[fill] (X\i) circle (2pt);
}

\node at (2.25,-.25) {$x_1$};
\node[above] at (X2) {$x_2$};
\node at (-.2,-.25) {$x_3$};
\node[below] at (X4) {$x_4$};
\node at (.75,-.25) {$x_5$};

\draw (X1) -- (X2) -- (X3) -- (X4) -- (X1);

\draw (X2) -- (X5);

\end{tikzpicture}

&
%%%%%%%%%%%%%%%%%%%
%% THIRD PICTURE
\begin{tikzpicture}[scale=.85]

\path(0,0) coordinate (X3);
\path(1,1) coordinate (X2);
\path(1,-1) coordinate (X4);
\path(2,0) coordinate (X1);
\path(1,0) coordinate (X5);

\foreach \i in {1,2,3,4,5}
{
\draw[fill] (X\i) circle (2pt);
}

\node at (2.25,-.25) {$x_1$};
\node[above] at (X2) {$x_2$};
\node at (-.2,-.25) {$x_3$};
\node[below] at (X4) {$x_4$};
\node at (.75,-.25) {$x_5$};

\draw (X1) -- (X2) -- (X3) -- (X4) -- (X1);

\draw (X1) -- (X5);
\draw (X2) -- (X5);

\end{tikzpicture}
&

%%%%%%%%%%%%%%%%%%%
%% FOURTH PICTURE
\begin{tikzpicture}[scale=.85]

\path(0,0) coordinate (X3);
\path(1,1) coordinate (X2);
\path(1,-1) coordinate (X4);
\path(2,0) coordinate (X1);
\path(1,0) coordinate (X5);

\foreach \i in {1,2,3,4,5}
{
\draw[fill] (X\i) circle (2pt);
}

\node at (2.25,-.25) {$x_1$};
\node[above] at (X2) {$x_2$};
\node at (-.2,-.25) {$x_3$};
\node[below] at (X4) {$x_4$};
\node at (.75,-.25) {$x_5$};

\draw (X1) -- (X2) -- (X3) -- (X4) -- (X1);

\draw (X2) -- (X4);

\end{tikzpicture}
&
%%%%%%%%%%%%%%%%%%%
%% FIFTH PICTURE
\begin{tikzpicture}[scale=.85]

\path(0,0) coordinate (X3);
\path(1,1) coordinate (X2);
\path(1,-1) coordinate (X4);
\path(2,0) coordinate (X1);
\path(1,0) coordinate (X5);

\foreach \i in {1,2,3,4,5}
{
\draw[fill] (X\i) circle (2pt);
}

\node at (2.25,-.25) {$x_1$};
\node[above] at (X2) {$x_2$};
\node at (-.2,-.25) {$x_3$};
\node[below] at (X4) {$x_4$};
\node at (.75,-.25) {$x_5$};

\draw (X1) -- (X2) -- (X3) -- (X4) -- (X1);

\draw (X1) -- (X5);
\draw (X2) -- (X5);
\draw (X4) -- (X5);

\end{tikzpicture}

\\
(a) & (b) & (c) & (d) & (e)

\end{tabular}

\caption{The five cases for $G[V \setminus N[u]]$ in  Lemma~\ref{lemma:gen-5-cycle}.}

\label{fig:4-cycle-cases}
\end{center}
\end{figure}
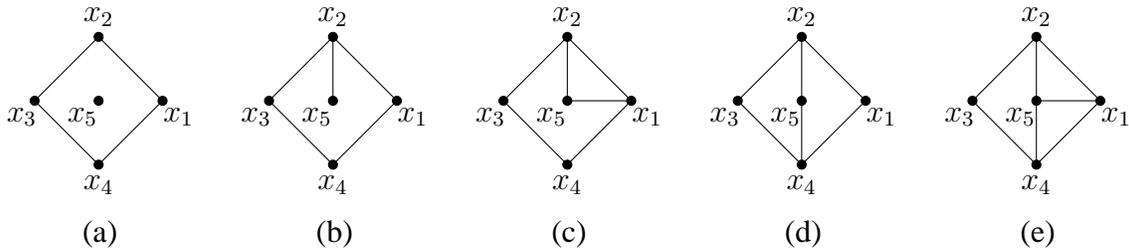

First we make some technical claims. We start by noting that
moving to $x_5$ is in most situations a bad idea for the robber in
Cases (a), (b) and (c).

\smallskip

\noindent \textbf{Claim 1}. In Cases (a), (b), and (c), if the state of the game is of the form
$\copst{N[u], V(H)}{x_5}$, the cops have a winning strategy.
%\label{claim:Rx5bad}

\smallskip

For the proof of the claim, $C_1$ moves to $u$. In Case (a), $S(R) = \{x_5\}$ and we are already
  done by Corollary~\ref{cor:small-safe-nbhd}. In Case (b), if possible,
  $C_2$ moves directly to $x_2$; otherwise, $C_2$ moves first to $x_1$
  and then to $x_2$; in either case the robber is trapped at $x_5$. In
  Case (c), $C_2$ moves to $x_2$ or $x_1$ (whichever $c_2$ is adjacent
  to), again trapping the robber in $x_5$ The proof of the claim follows.

Next we consider the structure of $N(y)\cap V(H)$ for vertices $y\in
N(u)$.

\smallskip

\noindent \textbf{Claim 2}.
%\label{claim:good-state}
Suppose the state of the game has the form $\copst{N[u], \{x_1,
  x_3\}}{y}$, where $y\in N(u)$ is such that
either \begin{inparaenum}[\upshape(\itshape a\upshape)]
\item \label{case:both} $N(y)\cap V(H)
  = \{x_2, x_4\}$, or \item \label{case:one}$y$ is adjacent to at most one of $x_2$ or
  $x_4$; \end{inparaenum} then the cops have a winning strategy.

\smallskip

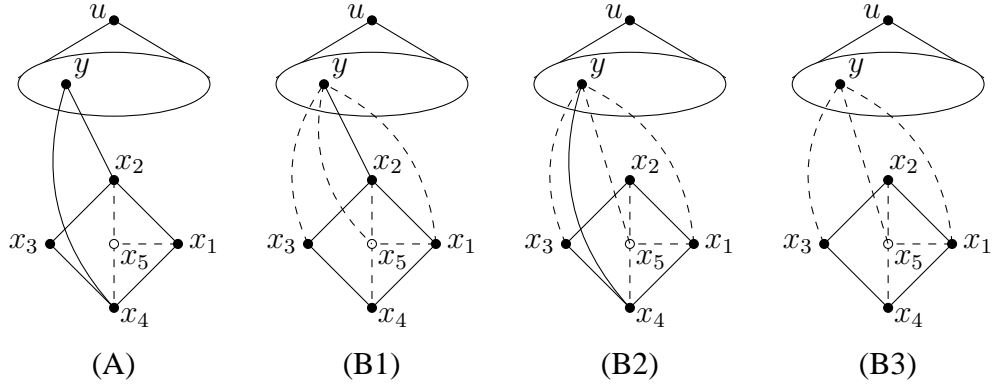
\begin{figure}
\begin{center}
%%%%%%%%%%%%%%%%%%%
%% more 4 cycle endgame
\begin{tabular}{cccc}
\begin{tikzpicture}[scale=.85]

%%%%%%%%%%%%%%
% V - \cN(u)

\path(0,0) coordinate (X3);
\path(1,1) coordinate (X2);
\path(1,-1) coordinate (X4);
\path(2,0) coordinate (X1);
\path(1,0) coordinate (X5);

\foreach \i in {1,2,3,4}
{
\draw[fill] (X\i) circle (2pt);
}

\node[right] at (X1) {$x_1$};
\node at (1.25,1.25) {$x_2$};
\node[left] at (X3) {$x_3$};
\node at (1.33,-1.15) {$x_4$};
\node at (1.33,-.25) {$x_5$};

\draw (X1) -- (X2) -- (X3) -- (X4) -- (X1);

\draw[dashed] (X2) -- (X5) -- (X4);
\draw[dashed] (X1) -- (X5);

%% this vertex may or may not be present
\draw[fill=white] (X5) circle (2pt);

%%%%%%
% \cN(u)

\path(1,3.5) coordinate (U);
\path(.25,2.5) coordinate (Y);

\draw (-.5, 2.6) -- (U) -- (2.5, 2.6);

\draw[fill=white] (1,2.5) ellipse (1.5 and .5);

\draw[fill] (U) circle (2pt);
\draw[fill] (Y) circle (2pt);

\node at (.75,3.65) {$u$};
\node at (.5,2.75) {$y$};

\draw (Y) -- (X2);
\draw (Y) to [bend right] (X4);

\end{tikzpicture}

&
\begin{tikzpicture}[scale=.85]

%%%%%%%%%%%%%%
% V - \cN(u)

\path(0,0) coordinate (X3);
\path(1,1) coordinate (X2);
\path(1,-1) coordinate (X4);
\path(2,0) coordinate (X1);
\path(1,0) coordinate (X5);

\foreach \i in {1,2,3,4}
{
\draw[fill] (X\i) circle (2pt);
}

\node[right] at (X1) {$x_1$};
\node at (1.25,1.25) {$x_2$};
\node[left] at (X3) {$x_3$};
\node at (1.33,-1.15) {$x_4$};
\node at (1.33,-.25) {$x_5$};

\draw (X1) -- (X2) -- (X3) -- (X4) -- (X1);

\draw[dashed] (X2) -- (X5) -- (X4);
\draw[dashed] (X1) -- (X5);

%% this vertex may or may not be present
\draw[fill=white] (X5) circle (2pt);

%%%%%%
% \cN(u)

\path(1,3.5) coordinate (U);
\path(.25,2.5) coordinate (Y);

\draw (-.5, 2.6) -- (U) -- (2.5, 2.6);

\draw[fill=white] (1,2.5) ellipse (1.5 and .5);

\draw[fill] (U) circle (2pt);
\draw[fill] (Y) circle (2pt);

\node at (.75,3.65) {$u$};
\node at (.5,2.75) {$y$};

\draw (Y) -- (X2);
\draw[dashed] (Y) to [bend right] (X5);
\draw[dashed] (X1) to [bend right] (Y);
\draw[dashed] (X3) to [bend left] (Y);

\end{tikzpicture}
&

\begin{tikzpicture}[scale=.85]

%%%%%%%%%%%%%%
% V - \cN(u)

\path(0,0) coordinate (X3);
\path(1,1) coordinate (X2);
\path(1,-1) coordinate (X4);
\path(2,0) coordinate (X1);
\path(1,0) coordinate (X5);

\foreach \i in {1,2,3,4}
{
\draw[fill] (X\i) circle (2pt);
}

\node[right] at (X1) {$x_1$};
\node at (1.25,1.25) {$x_2$};
\node[left] at (X3) {$x_3$};
\node at (1.33,-1.15) {$x_4$};
\node at (1.33,-.25) {$x_5$};

\draw (X1) -- (X2) -- (X3) -- (X4) -- (X1);

\draw[dashed] (X2) -- (X5) -- (X4);
\draw[dashed] (X1) -- (X5);

%% this vertex may or may not be present
\draw[fill=white] (X5) circle (2pt);

%%%%%%
% \cN(u)

\path(1,3.5) coordinate (U);
\path(.25,2.5) coordinate (Y);

\draw (-.5, 2.6) -- (U) -- (2.5, 2.6);

\draw[fill=white] (1,2.5) ellipse (1.5 and .5);

\draw[fill] (U) circle (2pt);
\draw[fill] (Y) circle (2pt);

\node at (.75,3.65) {$u$};
\node at (.5,2.75) {$y$};

\draw (Y) to [bend right] (X4);
\draw[dashed] (Y) to  (X5);
\draw[dashed] (X1) to [bend right] (Y);
\draw[dashed] (X3) to [bend left] (Y);

\end{tikzpicture}
&

\begin{tikzpicture}[scale=.85]

%%%%%%%%%%%%%%
% V - \cN(u)

\path(0,0) coordinate (X3);
\path(1,1) coordinate (X2);
\path(1,-1) coordinate (X4);
\path(2,0) coordinate (X1);
\path(1,0) coordinate (X5);

\foreach \i in {1,2,3,4}
{
\draw[fill] (X\i) circle (2pt);
}

\node[right] at (X1) {$x_1$};
\node at (1.25,1.25) {$x_2$};
\node[left] at (X3) {$x_3$};
\node at (1.33,-1.15) {$x_4$};
\node at (1.33,-.25) {$x_5$};

\draw (X1) -- (X2) -- (X3) -- (X4) -- (X1);

\draw[dashed] (X2) -- (X5) -- (X4);
\draw[dashed] (X1) -- (X5);

%% this vertex may or may not be present
\draw[fill=white] (X5) circle (2pt);

%%%%%%
% \cN(u)

\path(1,3.5) coordinate (U);
\path(.25,2.5) coordinate (Y);

\draw (-.5, 2.6) -- (U) -- (2.5, 2.6);

\draw[fill=white] (1,2.5) ellipse (1.5 and .5);

\draw[fill] (U) circle (2pt);
\draw[fill] (Y) circle (2pt);

\node at (.75,3.65) {$u$};
\node at (.5,2.75) {$y$};

\draw[dashed] (Y) to  (X5);
\draw[dashed] (X1) to [bend right] (Y);
\draw[dashed] (X3) to [bend left] (Y);

\end{tikzpicture}
\\
(A) & (B1) & (B2) & (B3)
\end{tabular}

\caption{The four classes of possible structures of $G$ for Claim~2. Vertex $x_5$  might not be present, and dashed edges might not be present.}
\label{fig:4cycle-continued}

\end{center}
\end{figure}

For the proof of the claim, Figure \ref{fig:4cycle-continued} shows the four classes of possible graph structures.
Let us first consider the structure (B1). Let $z=x_2$.
$C_1$ moves to $u$, and $C_2$ moves
to $z$. Now the robber is trapped in all cases of Figure \ref{fig:4-cycle-cases} except Case (a). In Case (a) the robber's only move is to $x_5$. After this move,  the robber can be caught by Claim~1.
The same cop strategy works for structures (B2) and (B3), taking $z=x_4$. A simplified version of this proof shows that the same cop strategy works for structure (A), taking $z=c_2$. The proof of the claim follows.

We remark that in Cases (d) and (e) of Figure \ref{fig:4-cycle-cases}, $x_5$ and $x_1$ are
symmetric, so the statement holds also for configuration $\copst{N[u], x_5}{y}$

The next claim concerns the situation where there are two vertices in $N(u)$
that do not satisfy the condition of the previous claim.

\smallskip

\noindent \textbf{Claim 3}.
%\label{claim:2-dominating}
If there are two vertices $y, z\in N(u)$ such that $ \{ x_2, x_3,x_4 \} \subseteq N(y)$, and
$\{ x_1, x_2, x_4 \} \subseteq N(z)$, then $c(G)\leq 2$.

\smallskip

For the proof of Claim 3, first we deal with all cases but Case (d). The cops start at $u$ and
$z$. If the robber starts at $x_3$, the cops' winning strategy is:
$\copst{u, z}{x_3}\copmove \robst{u, y}{x_3}\copwin$. \textbf{[Anthony: clarify the use of the $\copwin$ notation here.]}  If the robber
starts at $x_5$, the strategy will depend on the structure of $H$. In
Cases (a), (b), and (c) we are done by Claim~1. In Case
(e) the following is a winning strategy: $\copst{u, z}{x_5}\copmove
\robst{u, x_1}{x_5}\copwin$.

The remainder of the proof deals with Case (d), which requires a more
involved argument.

First suppose that there exists $w\in N(u)$ such that $\{x_2, x_4, x_5\}\subseteq N(w)$. Then the cops start at $u$ and $z$. The robber can
start at $x_3$ or $x_5$ in either case the cops have a winning
strategy: $\copst{u, z}{x_3}\copmove \robst{y,
  u}{x_3}\copwin$; or $\copst{u, z}{x_5}\copmove \robst{w,
  u}{x_5}\copwin$.

Now assume that no such $w$ exists. Start the cops at $u$ and $y$. The
robber starts in  $\{ x_1, x_5\}$. If the robber starts at $x_1$,
then $\copst{u, y}{x_1}\copmove\robst{y, u}{x_1}\copwin$. So we may assume the
robber starts at $x_5$. If $|N(x_5)\cap N(u)|\leq 1$, we are done by
Corollary~\ref{cor:small-safe-nbhd}. Otherwise, the cops move by $\copst{u,
x_3}{x_5}\copmove \robst{v, z}{x_5}$, for some $v\in N(x_5)\cap N(u)$. The
robber is forced to move to some $w\in N(x_5)\cap N(u)$ (if no such
$w$ exists, then $R$ is trapped). By our initial argument, $w$ cannot
be adjacent to both $x_2$ and $x_4$, so the state satisfies
the conditions of Claim~2(\ref{case:one}), and the proof of the claim follows.

% The general strategy is as follows: Start the cops at $u$ and
% $x_1$. We show that the robber must start at $x_3$. Then we move $C_1$
% to some $v\in N(u)\cap N(x_3)$. The robber is forced to some $y\in
% N(u)$. If $y$ does not satisfy the conditions of
% Claim~\ref{claim:good-state}, then we can force the robber to some
% $z\in N(u)$. If $z$ does not satisfy the conditions of
% Claim~\ref{claim:good-state}, then the pair $y, z$ will satisfy the
% conditions of Claim~\ref{claim:2-dominating}.
%
% Now we describe the cops' strategy in detail.

\smallskip

\noindent \textbf{Claim 4}.
%\label{claim:x1-N(u)}
Either $c(G)\leq 2$, or we can relabel the vertices of $H$ via an
automorphism of $H$ so that $x_1$ is adjacent to $N(u)$.

\smallskip

To prove the claim, suppose that no such relabeling exists. We will show a
winning strategy for the cops, starting at $u$ and $x_3$. In Cases (a)
and (b) the claim follows from Corollary~\ref{cor:small-safe-nbhd}
(either $S(R) = \{x_1\}$ or $S(R) = \{x_5\}$). In Cases (d) and (e), $S(R)\subseteq\{x_1, x_5\}$,
and we are assuming that both $x_1$ and $x_5$ have no edges to $N(u)$;
hence, $|N(S(R))|\leq 2$, and we are again done by
Corollary~\ref{cor:small-safe-nbhd}. In Case (c) $S(R)=\{x_1, x_5\}$,
and we are assuming that both $x_1$ and $x_2$ do not have neighbors in
$N(u)$. By Claim~1, we may assume $R$ does not start
at $x_5$, and so $R$ starts at $x_1$. Let $v\in N(u)\cap N(x_5)$ (if
$x_5\nsim N(u)$, then $N(S(R))$ is dominated by $c_2=x_3$). Now the
cops can win by following the strategy: $\copst{u, x_3}{x_1}\copmove
\copst{v, x_3}{x_1}\copwin$. The proof of the claim follows.

\medskip

Armed with the above claims, we now conclude the proof
Lemma~\ref{lemma:gen-5-cycle}.  By Claim~4, we may assume $x_1\sim w\in N(u)$. Initially
place $C_1$ at $u$ and $C_2$ at $x_1$. The robber could start at $x_3$
or, in Cases (a), (b), and (d), at $x_5$. If the robber starts at
$x_5$ in Cases (a) and (b), then we are done by
Claim~1. In Case (d), $x_5$ and $x_3$ are symmetric,
so without loss of generality, $r = x_3$, and the initial state is
$\copst{u, x_1}{x_3}$.

If $x_3\nsim N(u)$, then the cops win by Corollary~\ref{cor:small-safe-nbhd}.
Otherwise let $v\in N(x_3)\cap N(u)$. Then $C_1$ moves from $u$ to
$v$, while $C_2$ remains fixed at $x_1$, forcing $R$ to some $y\in
N(u)\cap N(x_3)$, with $y\nsim v, y\nsim x_1$.  If no such $y$ exists, then $R$ is
trapped. If $y$ is adjacent to only one of $x_2$ or $x_4$, we are in
the state $\copst{v\in N(u), x_1}{y}$, which satisfies the conditions of
Claim~2 (\ref{case:one}), and hence, the cops have
a winning strategy.

Otherwise $y$ is adjacent to $x_2, x_3$, and $x_4$. The cops move
$\copst{v, x_1}{y}\copmove \robst{x_3, w}{y}$, for some $w\in
N(x_1)\cap N(u)$. If $y\sim x_5$, and $R$ moves to $x_5$, then the
cops win: in Cases (a),(b),(c) we are done by
Claim~1; in Case (d), $\copst{x_3, w}{x_5}\copmove
\robst{y, u}{x_5}\copwin$; in Case (e), the cops can adopt a different
strategy from the beginning: $\copst{u, y}{x_1}\copmove \robst{u,
  x_5}{x_1}\copwin$. The only other option is for $R$ to move to some
$z\in N(u)$, $z\nsim x_3$. So the state is $\copst{x_3, w}{z}$. Either
the pair $y, z$ satisfies the conditions of
Claim~3, or the current state satisfies the
conditions of Claim~2(\ref{case:one}) or
(\ref{case:both}).  In either case, we are done. This concludes the
proof of Lemma \ref{lemma:gen-5-cycle}.
\end{proof}

We now state some quick but useful consequences of
Lemma~\ref{lemma:gen-5-cycle}.

\begin{corollary}
\label{cor:deg4b}
Let $G$ be a graph on $n$ vertices. If there is a vertex $u\in V$ of
degree at least $n-6$, and a vertex $v\in V\setminus N[u]$ such that
$|N(v)\setminus N(u)|\geq 3$, then $c(G) \leq 2$.  \endproof
\end{corollary}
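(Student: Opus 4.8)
The goal is to derive Corollary~\ref{cor:deg4b} from Lemma~\ref{lemma:gen-5-cycle}. The plan is a proof by contradiction: assume $c(G) \geq 3$, and use Lemma~\ref{lemma:gen-5-cycle} to conclude that $H := G[V \setminus N[u]]$ must be a $5$-cycle. So we are in the setting of the lemma's proof, with $H$ a $5$-cycle on vertices $x_1, x_2, x_3, x_4, x_5$ (in cyclic order), and we have a vertex $v \in V(H)$ with $|N(v) \setminus N(u)| \geq 3$. Since $v$ lies in the $5$-cycle $H$, it has exactly two neighbors inside $H$, so $N(v) \setminus N(u)$ can contain at most those two $H$-neighbors plus possibly $u$ itself --- but $v \not\sim u$ since $v \notin N[u]$. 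Wait: $N(v) \setminus N(u)$ could include vertices of $H$ not adjacent to $v$? No --- $N(v)$ only contains actual neighbors of $v$. So $N(v) \cap V(H)$ has size exactly $2$, and the only other vertices $v$ could be adjacent to lie in $N(u)$. Hence $N(v) \setminus N(u) \subseteq N(v) \cap V(H)$, which has size $2$ --- contradicting $|N(v)\setminus N(u)| \geq 3$.

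So the first thing I would do is double-check this degree count, because the corollary would otherwise be vacuous. Indeed $v \in V(H)$, $H$ is a $5$-cycle, so $\deg_H(v) = 2$; every other neighbor of $v$ lies in $N[u]$, and since $v \notin N[u]$ in fact every other neighbor lies in $N(u)$. Therefore $N(v) \setminus N(u) = N(v) \cap V(H)$ has exactly two elements, so the hypothesis $|N(v)\setminus N(u)| \geq 3$ is impossible. This means the corollary is really the statement: \emph{if $\Delta(G) \geq n - 6$ and some vertex outside $N[u]$ has $3$ or more neighbors outside $N(u)$, then $H$ is not a $5$-cycle, hence (by Lemma~\ref{lemma:gen-5-cycle}) $c(G) \leq 2$.} Under this reading the proof is a one-liner: the hypothesis on $v$ directly rules out the $5$-cycle alternative in the dichotomy of Lemma~\ref{lemma:gen-5-cycle}, so $c(G)\leq 2$.

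Concretely, the proof I would write is: Let $H = G[V \setminus N[u]]$. If $c(G) > 2$, then by Lemma~\ref{lemma:gen-5-cycle}, $H$ is a $5$-cycle. But then every vertex $w$ of $H$ has $\deg_H(w) = 2$, so $N(w) \setminus N(u) \subseteq N(w) \cap V(H)$ has size exactly $2$ (using $w \notin N[u]$, so no neighbor of $w$ equals $u$). In particular no vertex $v \in V \setminus N[u] = V(H)$ can satisfy $|N(v) \setminus N(u)| \geq 3$, contradicting the hypothesis. Hence $c(G) \leq 2$.

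The only genuine subtlety --- and the step I would double-check most carefully --- is the claim $N(v)\setminus N(u)\subseteq V(H)$, i.e.\ that every neighbor of $v$ lying outside $N(u)$ must lie in $H = G[V\setminus N[u]]$. This is immediate from the partition $V = N(u) \sqcup \{u\} \sqcup V(H)$: a neighbor of $v$ is either in $N(u)$, or is $u$ (impossible since $v\notin N[u]$), or is in $V(H)$. So $N(v)\setminus N(u) \subseteq \{u\}\cup V(H) = V(H)$ since $v\not\sim u$. There is no real obstacle here; the result is a bookkeeping corollary whose purpose is to be cited later when analyzing $10$-vertex graphs with $\Delta(G) = 4$, where ruling out high-degree vertices outside a dominating closed neighborhood will be convenient.
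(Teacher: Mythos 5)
Your proof is correct and is essentially identical to the paper's one-line argument: the hypothesis forces $v$ to have at least three neighbors in $G[V\setminus N[u]]$, so that induced subgraph cannot be a $5$-cycle, and Lemma~\ref{lemma:gen-5-cycle} then gives $c(G)\leq 2$. The extra bookkeeping you supply (that $N(v)\setminus N(u)\subseteq V(H)$ because $v\not\sim u$) is exactly the implicit step the paper leaves to the reader.
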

\begin{proof}
The vertex $v$ has three neighbors in $G[V\setminus N[u]]$, and hence, $G[V\setminus N[u]]$ cannot be a 5-cycle.
\end{proof}

\smallskip

\begin{corollary}~\label{cor:deg4deg3}
Let $G$ be a graph on $n$ vertices. If there is a vertex $u$ of degree
at least $n-6$ and a vertex  $v \in V \setminus N[u]$ with $\deg(v) \leq 3$, then
$c(G)\leq 2$.
\end{corollary}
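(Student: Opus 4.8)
The plan is to dispatch the statement through Lemma~\ref{lemma:gen-5-cycle}. If $G[V\setminus N[u]]$ is not a $5$-cycle, then Lemma~\ref{lemma:gen-5-cycle} already gives $c(G)\le 2$, so assume $H:=G[V\setminus N[u]]$ is a $5$-cycle, which we label cyclically as $x_1x_2x_3x_4x_5$ with $v=x_1$. Then $\deg_H(x_1)=2$, so the hypothesis $\deg_G(x_1)\le 3$ says that $x_1$ has at most one neighbor in $N(u)$; call it $p$ if it exists. Recall also that $V=N[u]\cup V(H)$ and that every neighbor, outside $H$, of a vertex of $H$ lies in $N(u)$.

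I would then exhibit a winning strategy for two cops. First move $C_1$ to $u$ and keep it there during the opening; since $N[C_1]=N[u]$ and $H$ is the only component of $G-N[u]$, and since from a vertex of $N(u)$ the robber has no reply that both escapes the threat of $C_1$ and leaves $N[u]$, after finitely many rounds we may assume the robber is on $H$ with $S(R)=V(H)$. Next, walk $C_2$ through $u$ and onto the cycle (possible since $G$ is connected and $H$ is joined to the rest of $G$ only through $N(u)$). Once $C_2$ reaches a cycle vertex $x_i$ and the robber is safe, the robber lies in $V(H)\setminus\{x_{i-1},x_i,x_{i+1}\}=\{x_{i+2},x_{i+3}\}$ (subscripts mod $5$), so $S(R)$ is the cycle-edge opposite $x_i$; advancing $C_2$ one vertex along the cycle toward the robber rotates this edge, and a dash into $N(u)$ never helps the robber while $C_1$ sits on $u$. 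Rotating in the appropriate direction until the opposite edge is $\{x_1,x_2\}$, we reach the configuration with $C_1$ on $u$, $C_2$ on $x_4$, and $S(R)=\{x_1,x_2\}$.

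It remains to finish from this configuration. Here $N(S(R))=\{x_3,x_5\}\cup\big(N(x_1)\cap N(u)\big)\cup\big(N(x_2)\cap N(u)\big)$, and since $|N(x_1)\cap N(u)|\le 1$ this set has size at most $3$ precisely when $N(x_2)\cap N(u)\subseteq N(x_1)\cap N(u)$; in that case Corollary~\ref{cor:small-safe-nbhd} applies and the cops win. Otherwise pick $q\in(N(x_2)\cap N(u))\setminus N(x_1)$ and move $C_1$ from $u$ to $q$. Now $x_2$ and $u$ lie in $N[C_1]$ and $x_5$ lies in $N[C_2]$, so the robber can no longer hold $x_2$, and one checks --- case by case according to whether the robber retreats to $x_1$, to the vertex $p$, or to a neighbor of $x_2$ in $N(u)$ --- that within one or two further cop moves the robber's safe component has order at most $2$ and neighborhood of size at most $3$, whence Corollary~\ref{cor:small-safe-nbhd} again closes the game.

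The step I expect to be the main obstacle is exactly this last one: ruling out an infinite oscillation in which the robber repeatedly ducks into $N(u)$ through a neighbor of $x_2$ (or, by symmetry, of $x_5$) as a temporary refuge while $C_1$ is away from $u$, only to be flushed back onto the cycle. Making this rigorous requires either a careful choice of $C_2$'s location --- ideally a vertex of $N(u)$ dominating three consecutive cycle vertices, when such a vertex exists, so that the robber cannot reach $N(u)$ from the opposite edge at all --- or a monovariant argument on the pair consisting of the order of $S(R)$ and the position of $C_2$. In every variant, the hypothesis $\deg_G(x_1)\le 3$ is exactly what guarantees that, once the robber is finally pinned at $x_1$, his lone escape route $p$ can be sealed off by the cop returning from $u$, completing the capture.
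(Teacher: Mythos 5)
Your reduction via Lemma~\ref{lemma:gen-5-cycle} to the case where $H=G[V\setminus N[u]]$ is a $5$-cycle, and your target configuration $\copst{u,x_4}{\{x_1,x_2\}}$, match the paper's proof (the intermediate ``rotation'' of $C_2$ around the cycle is unnecessary: the cops choose their initial positions before the robber does, so they may simply start at $u$ and $x_4$). The problem is the endgame, and it is a genuine gap rather than a missing routine verification. After you move $C_1$ from $u$ to $q\in N(x_2)\cap N(u)$, a robber on $x_2$ may flee to another vertex $w\in N(x_2)\cap N(u)$ with $w\notin N[q]\cup N[x_4]$; when $C_1$ returns to $u$ to flush him out, his only safe reply is to step back to $x_2$, and nothing in your argument prevents this cycle from repeating forever. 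Note that only $x_1=v$ carries the degree hypothesis, so $x_2$ may have arbitrarily many pairwise non-adjacent neighbors in $N(u)$, and your first proposed repair (a vertex of $N(u)$ dominating three consecutive cycle vertices) need not exist; the second (a monovariant) is not exhibited. You correctly identify this as the main obstacle, but the proof as written does not overcome it.

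The paper's endgame avoids the oscillation by never attacking $x_2$ while the robber can still use $x_2$'s $N(u)$-neighbors as a refuge. It first drives the robber onto the low-degree vertex $x_1$ (if the robber sits on $x_2$, advance $C_2$ from $x_4$ to $x_3$, which forces him to $x_1$ since $C_1$ on $u$ covers all of $N(u)$), so that the robber's sole exit into $N(u)$ is the single vertex $y_1=p$. It then splits into two cases according to where $y_1$ attaches to the cycle. If $y_1$ does not exist or $y_1\sim x_4$ (or symmetrically $x_3$), the moves $C_1\colon u\to y_2\to x_2$, with $C_2$ parked on $x_4$, cover every neighbor of $x_1$ and trap the robber. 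Otherwise $N(y_1)\cap V(H)\subseteq N[x_1]$, and the cops play $(u,x_4)\to(y_2,x_5)$, which forces the robber from $x_1$ onto $y_1$, followed by $(y_2,x_5)\to(u,x_1)$: now $C_1$ on $u$ covers $y_1$ and all of its neighbors inside $N[u]$, while $C_2$ on $x_1$ covers its remaining neighbors $\{x_1,x_2,x_5\}$. Your closing intuition --- that $\deg_G(x_1)\le 3$ is exactly what lets the cops seal off $p$ --- is correct; the missing content is precisely this two-case analysis of $y_1$'s cycle adjacencies, which is what actually rules out the robber's escape into $N(u)$.
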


\begin{proof}
By Lemma~\ref{lemma:gen-5-cycle}, we only need to consider the case
where $G[V \setminus N[u]]$ is a 5-cycle, $x_1, x_2, x_3, x_4, x_5$ (in that order). Without
loss of generality, let $\deg(x_1)\leq 3$, and $\deg(x_2)\geq 3$. For
each $i=1, \dots, 5$ such that $\deg(x_i)\geq 3$, pick some $y_i\in
N(x_i)\cap N(u)$ arbitrarily (we allow $y_i=y_j$ for $i\neq
j$). The game starts as $\copst{u,x_4}{\{x_1,x_2\}} \crmove \copst{u, \{x_3, x_4\}}{x_1}$.
First we deal with the case where $\deg(x_1)=2$ and the case where
$\deg(x_1)=3$ and $y_1\sim x_4$.
The cops' winning strategy for these two cases is the same: $\copst{u,
  \{x_3, x_4\}}{x_1}\crmove \copst{y_2, x_4}{x_1}\copmove \robst{x_2,
  x_4}{x_1}\copwin.$

Now we may assume that all $x_i$ have degree $3$, and hence, $y_i$ exists for
all $i$. We may further assume that $x_4\neq y_1$, and, since $x_3$ and $x_4$ are symmetric, we are also done in the case $y_1\sim x_3$.
The only remaining possibility is $N(y_1)\cap
(V \setminus N[u])\subseteq N[x_1]$.  Since $x_3$ and $x_4$ are symmetric,
without loss of generality, the state is $\copst{u, x_4}{x_1}$. The
cops first move to $y_2$ and $x_5$, forcing the robber to $y_1$, then
in one more move, the robber is trapped at $y_1$: $\copst{y_2,
  x_5}{y_1}\copmove\robst{u, x_1}{y_1}\copwin.$
\end{proof}

These corollaries are enough to prove that every 9-vertex
graphs is 2 cop-win, and to show that if $v(G)=10$ and $\Delta(G)=4$
then $c(G) \leq 2$.

\begin{proof}[Proof of Theorem \ref{thm:9-vtx-c2}]
If $\Delta(G)\geq 4$, then we are done by
Lemma~\ref{lemma:gen-5-cycle}. If $\Delta(G) = 3$, then we are done by
Corollary~\ref{cor:deg4deg3}.
\end{proof}

\begin{lemma}\label{lemma:deg-leq-3}
If $v(G)=10$ and $\Delta(G)\geq 4$, then $c(G)\leq 2$.
\end{lemma}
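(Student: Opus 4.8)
The plan is to strip away the easy situations with the corollaries already established, and then to run an explicit two-cop pursuit on the single configuration that survives. If $\Delta(G)\ge 5=n-5$, then Corollary~\ref{cor:deg=n-5} gives $c(G)\le 2$, so I may assume $\Delta(G)=4$ and fix a vertex $u$ with $\deg(u)=4$. By Lemma~\ref{lemma:gen-5-cycle}, either $c(G)\le 2$ and we are done, or $H:=G[V\setminus N[u]]$ is a $5$-cycle $x_1x_2x_3x_4x_5$. By Corollary~\ref{cor:deg4deg3} I may further assume that every vertex of $H$ has degree exactly $4$ in $G$; since each $x_i$ already has two neighbours on the cycle, each $x_i$ has exactly two neighbours in $N(u)$. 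Writing $N(u)=\{a_1,a_2,a_3,a_4\}$, there are exactly $10$ edges between $\{x_i\}$ and $N(u)$, each $a_j$ meets at most $4-1=3$ of them, and no $a_j$ can meet $0$ of them (the remaining three $a_j$ would then supply at most $9<10$ edges), so the degree sequence of $a_1,\dots,a_4$ toward the cycle is either $(3,3,3,1)$ or $(3,3,2,2)$.

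Now I set up the pursuit. Put $C_1$ at $u$; since a vertex of $H$ leaves $H$ only into $N(u)\subseteq N[u]$, the robber is confined to the cycle $H$ for as long as $C_1$ stays at $u$. Put $C_2$ at $x_{j+3}$ (indices mod $5$); then $N[x_{j+3}]\cap H=\{x_{j+2},x_{j+3},x_{j+4}\}$, so the robber is forced to start on the edge $\{x_j,x_{j+1}\}$ and, while the cops hold these positions, to stay there. The index $j$ is chosen via the following observation, valid in both degree-sequence cases: some $a\in N(u)$ has three neighbours on the $5$-cycle, and three vertices of $C_5$ cannot be independent, so two of those neighbours are consecutive; let $x_jx_{j+1}$ be such an edge and $a$ a common neighbour of its endpoints.

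The decisive move is to send $C_1$ from $u$ to $a$, reaching (after one cop move) the state with $C_1$ on $a$, $C_2$ on $x_{j+3}$, and the robber still on $x_j$ or $x_{j+1}$. The robber cannot stay (it is adjacent to $a=C_1$), cannot move to $x_{j-1}$ or $x_{j+2}$ (adjacent to $C_2$), and cannot move to the other endpoint of the edge (adjacent to $C_1$); its only option is to move to its unique remaining $N(u)$-neighbour $a'\neq a$. If $a'$ is dominated by $C_2=x_{j+3}$, the robber is trapped and the cops win. Otherwise the robber escapes to some $a'\in N(u)\setminus\{a\}$ with $a'\sim x_j$ or $a'\sim x_{j+1}$ and $a'\not\sim x_{j+3}$, and I finish with a short secondary chase: keep $C_1$ on $a$ (which dominates $u$, $x_j$, $x_{j+1}$, and the fourth neighbour of $a$) while $C_2$ moves to cover the few remaining neighbours of $a'$, or advance $C_1$ to $u$ to threaten $a'$ directly; in each branch one reaches a state in which the robber's safe neighbourhood is a single vertex, or two adjacent vertices, having at most $2k-1=3$ neighbours outside it, so that Corollary~\ref{cor:small-safe-nbhd}---and in the low-degree subcases Corollary~\ref{cor:one-deg-3}---closes the game. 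The reductions are precisely what bound this mop-up: in the $(3,3,3,1)$ case the three $a_j$ with three cycle-neighbours are saturated of degree $4$, hence pairwise non-adjacent, which fixes $N(a')$; in the $(3,3,2,2)$ case the degrees within $N(u)$ are forced to be $(4,4,3,3)$ or $(4,4,4,4)$, again pinning down the local structure around $a'$.

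I expect the secondary chase to be the main obstacle. A lone cop cannot catch a robber confined to a $5$-cycle, so $C_1$ cannot leave $u$ until $C_2$ has pinned the robber onto a $2$-path, and the moment $C_1$ leaves $u$ the robber can leak back into $N(u)$; dispatching every such leak-back---best organised as a handful of claims in the style of the proof of Lemma~\ref{lemma:gen-5-cycle}, tracking turn order and the robber's legal moves at each step---is where the bulk of the work lies, and it is exactly the combination $|N(u)|=4$, every $x_i$ of degree $4$, and $a$-degree sequence $(3,3,3,1)$ or $(3,3,2,2)$ that forces these positions into the reach of the endgame corollaries.
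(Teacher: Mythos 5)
Your opening reductions are sound and coincide with the paper's: reduce to $\Delta(G)=4$, invoke Lemma~\ref{lemma:gen-5-cycle} to make $H=G[V\setminus N[u]]$ a $5$-cycle, invoke Corollary~\ref{cor:deg4deg3} to force every $x_i$ to have degree $4$, and count the $10$ edges in $[N(u):V(H)]$ to find a vertex $a\in N(u)$ with three neighbours on the cycle. The gap is everything after the robber escapes to $a'$. You assert that ``in each branch one reaches a state'' where Corollary~\ref{cor:small-safe-nbhd} or \ref{cor:one-deg-3} applies, but you never exhibit those branches, and you yourself flag the secondary chase as ``the main obstacle'' and ``where the bulk of the work lies.'' That obstacle is real: in the $(3,3,2,2)$ case, take $a'$ to be a light vertex $c$ adjacent to the other light vertex $d$ with $d\nsim x_{j+3}$. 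Then from the state $\copst{a,x_{j+3}}{c}$ one finds $S(R)=\{c,d\}$ with $|N(S(R))|$ up to $5$ and with $\{u,x_j\}\subseteq N(S(R))\cap N'_C(a)$, so neither Corollary~\ref{cor:small-safe-nbhd} nor Lemma~\ref{obs:key}(b) applies for the cop on $a$; and the natural continuation (retreat $C_1$ to $u$, pushing the robber back to $x_j$) returns to an earlier position, i.e.\ the strategy as written can cycle. So the proposal is a strategy sketch, not a proof: the one configuration you say survives the reductions is exactly the one you do not finish.

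For comparison, the paper closes this case without any pursuit. Since $a$ has degree $4=n-6$ and $N(a)=\{u,x_j,x_{j+1},x_w\}$ is disjoint from $N(u)$, Lemma~\ref{lemma:gen-5-cycle} applies to $a$ as well, so $G[V\setminus N[a]]$ must \emph{also} be a $5$-cycle. Two interlocking induced $5$-cycles on $10$ vertices pin $G$ down to two explicit configurations, and in each one a vertex of degree $3$ outside some closed neighbourhood (or a failure of the $5$-cycle condition) is located, so Corollary~\ref{cor:deg4deg3} or Lemma~\ref{lemma:gen-5-cycle} finishes. If you want to salvage your pursuit-based route, you must either carry out the $(3,3,2,2)$, $c\sim d$ endgame explicitly (with turn order tracked as in the claims of Lemma~\ref{lemma:gen-5-cycle}), or replace it with this second application of Lemma~\ref{lemma:gen-5-cycle}.
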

\begin{proof}
Let $u\in V(G)$ have degree at least $4$. By
Lemma~\ref{lemma:gen-5-cycle}, either $c(G)\leq 2$ or $\deg(u)=4$, and
$G[V \setminus N[u]]$ is a 5-cycle. Now, by Corollary~\ref{cor:deg4deg3},
either $c(G)\leq 2$, or every $u\in V - N[u]$ has $\deg(u)\geq 4$. In
the latter case, $|[N(u):V\setminus N[u]]|\geq 10$; thus, by the pigeonhole
principle, there exists $v\in N(u)$ such that $|N(v)\cap (V\setminus N[u])|\geq
3$. We now deal with this case, namely $u$ and $v$ have degree $4$,
and $N(u)\cap N(v) = \emptyset$.

By Lemma~\ref{lemma:gen-5-cycle}, both $G[V(G)\setminus N[u]]$ and
$G[V(G)-\setminus N[v]]$ are $5$-cycles. The resulting graph
structure must be one of the two shown in Figure \ref{fig:deg4c}.
Considering the structure in Figure \ref{fig:deg4c}(a), we note that
$\deg(z_1) = \deg(z_2)=3$ in order to maintain the induced 5-cycle
structures, and hence, we are done by Corollary~\ref{cor:deg4deg3}.

Now suppose that $G$ has the structure in Figure
\ref{fig:deg4c}(b). In this case we show $\deg(x_3)=3$, and we are
again done by Corollary~\ref{cor:deg4deg3}. To show that
$\deg(x_3)=3$, we look at each potential additional edge, and show that
$V \setminus N[x_3]$ is not a 5-cycle, and hence, we are done by
Lemma~\ref{lemma:gen-5-cycle}. We only need to consider edges to $y_1,
y_2$ or $y_3$: other potential edges would not maintain the induced
5-cycle structure. We have $x_3 \nsim y_1$ because $\{v, y_2, y_3 \}$
form a triangle. We have $x_3 \nsim y_3$ because $z_1$ is adjacent to
each of $x_1, y_1, y_2$. Finally, $x_3 \nsim y_2$ because the
existence of this edge would force $y_3 \sim x_1$, which is symmetric
to the forbidden $x_3 \sim y_1$.
\end{proof}

\begin{figure}[ht]
\begin{center}
\begin{tabular}{cc}

%%%%%%%%%%%%%%
% symmetric v1
\begin{tikzpicture}[scale=.8]

\path (0,0) coordinate (U);
\path (4,0) coordinate (V);

\path (2,-.5) coordinate (Z2);
\path (2,.5) coordinate (Z1);

\path (1,-1) coordinate (X3);
\path (1,0) coordinate (X2);
\path (1,1) coordinate (X1);

\path (3,-1) coordinate(Y3);
\path (3,0) coordinate (Y2);
\path (3, 1) coordinate (Y1);

\draw[fill] (U) circle (2pt);
\draw[fill] (V) circle (2pt);

\draw (U) circle (4pt);
\draw (V) circle (4pt);

\node[left=2pt] at (U) {$u$};
\node[right=2pt] at (V) {$v$};

\foreach \i in {1,2,3}
{
\draw[fill] (Y\i) circle (2pt);
\draw (V) -- (Y\i);

\draw[fill] (X\i) circle (2pt);
\draw (U) -- (X\i);
}

\node[above] at (X1) {$x_1$};
\node[above] at (Y1) {$y_1$};

\node[right] at (X2) {$x_2$};
\node[left] at (Y2) {$y_2$};

\node[below] at (X3) {$x_3$};
\node[below] at (Y3) {$y_3$};

\foreach \i in {1, 2}
{
\draw[fill] (Z\i) circle (2pt);
\draw (Z\i) circle (4pt);
}

\node[above=2pt] at (Z1) {$z_1$};
\node[below=2pt] at (Z2) {$z_2$};
\draw (Z1) -- (X1) -- (X2) -- (X3) -- (Z2) -- (Y3) -- (Y2) -- (Y1) -- (Z1) -- (Z2);

\path (0,2.5) coordinate (CU);
\path (4,2.5) coordinate (CV);
\draw (U) .. controls (CU) and (CV) .. (V);

\end{tikzpicture}

&
%%%%%%%%%%%%%%%%%%%
% symmetric v2
\begin{tikzpicture}[scale=.8]

\path (0,0) coordinate (U);
\path (4,0) coordinate (V);

\path (2,-.5) coordinate (Z2);
\path (2,.5) coordinate (Z1);

\path (1,-1) coordinate (X3);
\path (1,0) coordinate (X2);
\path (1,1) coordinate (X1);

\path (3,-1) coordinate(Y3);
\path (3,0) coordinate (Y2);
\path (3, 1) coordinate (Y1);

\draw[fill] (U) circle (2pt);
\draw[fill] (V) circle (2pt);

\draw (U) circle (4pt);
\draw (V) circle (4pt);

\node[left=2pt] at (U) {$u$};
\node[right=2pt] at (V) {$v$};

\foreach \i in {1,2,3}
{
\draw[fill] (Y\i) circle (2pt);
\draw (V) -- (Y\i);

\draw[fill] (X\i) circle (2pt);
\draw (U) -- (X\i);
}

\node[above] at (X1) {$x_1$};
\node[above] at (Y1) {$y_1$};

\node[above] at (X2) {$x_2$};
\node[above] at (Y2) {$y_2$};

\node[below] at (X3) {$x_3$};
\node[below] at (Y3) {$y_3$};

\foreach \i in {1, 2}
{
\draw[fill] (Z\i) circle (2pt);
\draw (Z\i) circle (4pt);
}

\node[above=2pt] at (Z1) {$z_1$};
\node[below=2pt] at (Z2) {$z_2$};
\draw (Z1) -- (X1) -- (Z2);
\draw (Z1) -- (X2) -- (X3) -- (Z2);
\draw (Z1) -- (Y1) -- (Z2);
\draw (Z1) -- (Y2) -- (Y3) -- (Z2);

\path (0,2.5) coordinate (CU);
\path (4,2.5) coordinate (CV);
\draw (U) .. controls (CU) and (CV) .. (V);

\end{tikzpicture} \\
(a) & (b)
\end{tabular}

\caption{The two possible starting structures in the proof of Lemma
  \ref{lemma:deg-leq-3}. Circled vertices cannot have additional edges.}

\label{fig:deg4c}

\end{center}
\end{figure}
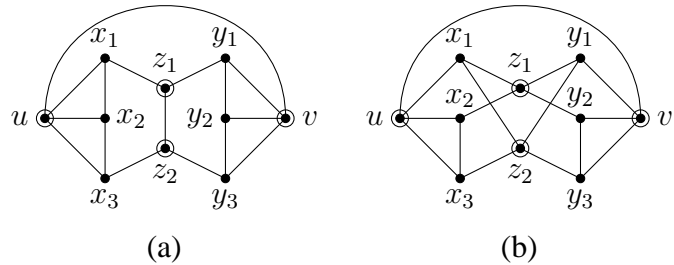

\subsection{Graphs with $\Delta(G) = n-7$}

In this section, we complete the proof of Theorem \ref{thm:petersen}.

\begin{lemma}\label{lemma:n-7}
Let $G$ be a graph with a vertex $u$ with $\Delta(G) = \deg(u) = n-7$ and such that
$\deg(v) \leq 3$ for every $v \in V \setminus N[u]$. Then either $c(G)=2$ or the induced subgraph $G[V\setminus N[u]]$ is a 6-cycle.
\end{lemma}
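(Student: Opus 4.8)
The plan is to keep one cop parked at $u$ and let the other do all the work inside $H:=G[V\setminus N[u]]$. Since $\deg(u)=n-7$ we have $\abs{V(H)}=6$, and by hypothesis every vertex of $H$ has degree at most $3$ in $G$. If $C_1$ starts at $u$ and stays there, the robber can never enter $N(u)$ without being caught on the next move, so he is confined to $H$ (and he will start there too, since starting in $N[u]$ loses at once); thus $C_2$ is effectively playing the one-cop game on $H$, with the extra, harmless freedom of relaying through vertices of $N(u)$. By Lemma~\ref{obs:key}(\ref{key:empty}), if the robber's safe neighborhood $S(R)\subseteq V(H)$ ever induces a cop-win graph the cops win; in particular, if every component of $H$ is cop-win we are done. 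As in the proof of Lemma~\ref{lemma:gen-5-cycle}, a graph with no induced cycle of length at least $4$ is chordal, hence dismantlable, hence cop-win, so the only case left is that $H$ contains an induced cycle of length $4$, $5$, or $6$ (these being the only possibilities since $\abs{V(H)}=6$). From here on assume $H$ is not cop-win and fix such an induced cycle.

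The length-$6$ case is immediate: an induced $C_6$ must use all of $V(H)$, and any additional edge would be a chord, so $H=C_6$, the second alternative of the lemma. If $H$ is disconnected, each component has at most $5$ vertices, so the robber's component is cop-win unless it contains an induced $C_4$ or $C_5$, and I would then restrict attention to that component. So it remains to show that if $H$ has an induced $C_4$ or $C_5$ then $c(G)\le 2$.

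For that I would run a bounded case analysis steered by the degree bound. Write the induced cycle as $x_1x_2x_3x_4x_5$ (or $x_1x_2x_3x_4$) and let $x_6$ (or $x_5,x_6$) be the leftover vertex or vertices. Each cycle vertex already has two neighbors on the cycle, so it has at most one further neighbor, which lies in $N(u)$ or among the leftover vertices; and each leftover vertex has degree at most $3$ in $G$. This cuts down both $H$ and the edge set between $V(H)$ and $N(u)$ to a short list of shapes. For each shape I would produce a two-cop strategy in the style of Claims 1--4 in the proof of Lemma~\ref{lemma:gen-5-cycle}: keep $C_1$ on $u$, start $C_2$ at a carefully chosen vertex (a cycle vertex, or a vertex of $N(u)$ with several neighbors among the $x_i$), and push the robber until $S(R)$ is reduced to one of: a set of at most two vertices with at most three outside neighbors, finished by Corollary~\ref{cor:small-safe-nbhd}; a subgraph of $H$ having at most one vertex of $G$-degree $3$, finished by Corollary~\ref{cor:one-deg-3} (the condition $\max_{v\in S(R)}\deg_G(v)\le 3$ being automatic here); or a cop-win subgraph such as a path, finished by Lemma~\ref{obs:key}. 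The tracking is exactly as in that proof: for the current positions $c_2$ and $r\in V(H)$ one bookkeeps which cycle vertices $r$ is adjacent to and which of the robber's escape vertices lie in $N(u)$.

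I expect this last step to be the whole difficulty; everything before it is essentially one line. The hard part is the exhaustive---but elementary---verification that, for every admissible way an induced $C_4$ or $C_5$ can sit inside a $6$-vertex graph of maximum degree $3$ and attach to $N(u)$, a single roaming cop (with $u$ blocked by $C_1$, and with relaying through $N(u)$ available) can corner the robber in $H$, while correctly accounting for the robber's option of retreating onto the cycle and for the possibility that $H$ is disconnected.
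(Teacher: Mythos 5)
Your reduction is fine as far as it goes: parking $C_1$ at $u$ confines the robber to $H$, chordal components are cop-win, an induced $C_6$ forces $H\cong C_6$, so everything hinges on showing that an induced $C_4$ or $C_5$ in $H$ yields $c(G)\le 2$. But that last step is the lemma; you describe a plan for it ("run a bounded case analysis\ldots for each shape I would produce a two-cop strategy") and explicitly defer the verification. The paper's proof consists almost entirely of that verification: for the $C_4$ case it splits on whether $x_3\sim x_6$ and on how many neighbors $x_6$ has in $N(u)$, and for the $C_5$ case it splits on whether $x_6$ has a second neighbor on the cycle, whether $x_2$ is adjacent to $N(u)$, and whether $x_2,x_4$ (resp.\ $x_3,x_5$) share a common neighbor $y$ (resp.\ $z$) in $N(u)$, giving explicit move sequences in each branch. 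Without these branches carried out, the proof is an outline, not a proof.

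There is also a substantive worry with the framework you commit to. You assert that $C_2$ is "effectively playing the one-cop game on $H$" with $C_1$ fixed at $u$, but $c(H)=2$ in the cases that matter, so a lone cop cannot win on $H$; the win must come from the interaction with $N(u)$, and in the paper's strategies $C_1$ does \emph{not} stay at $u$: in the $C_4$ case $C_1$ walks through $N(u)$ to $x_6$ while $C_2$ pins $x_1$, and in the $C_5$ case $C_1$ moves to a common neighbor $z\in N(u)$ of $x_3$ and $x_5$ (the final position $\copst{z,x_2}{x_4}$ is then closed out by Corollary~\ref{cor:one-deg-3}). So it is not established -- and not obviously true -- that the restricted strategy space you propose (only $C_2$ moves, relaying through $N(u)$) suffices; you would either have to prove that or allow $C_1$ to leave $u$, at which point your one-line dismissal of the disconnected case (which relies on $C_1$ guarding $u$ forever) also needs the more careful argument the paper gives via the adaptation of Corollary~\ref{cor:deg4deg3}.
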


This lemma can be generalized a bit more. In particular, if we remove
the restriction on the maximum of degree of vertices in $V\setminus N[u]$,
then the proofs of Lemmas~\ref{lemma:gen-5-cycle} and~\ref{lemma:n-7}
can be adapted to show that $H$ must contain an induced 5-cycle or
6-cycle. However, the case analysis is cumbersome, so we have opted
for this simpler formulation. The version stated above is sufficient
to prove one our the main results: the Petersen graph is the only
10-vertex graph requiring 3 cops.

\begin{proof}[Proof of Lemma~\ref{lemma:n-7}]
Let $H=G[V\setminus N[u]]$ and suppose that $c(G) >2$. First, we observe that $H$ must be connected. Otherwise, we can adapt the proof of Corollary~\ref{cor:deg4deg3} to show that $c(G)=2$. Indeed, $H$ has at most one component $H_1$ whose cop number is 2. We use the strategy described in the proof of Corollary~\ref{cor:deg4deg3} to capture the robber. The only alteration of the strategy is to address the robber moving from $N(u)$ to $H - H_1$. However, $|V(H-H_1)| \leq 2$, so this component is cop-win. One cop responds by moving to $u$, while the other moves into $H-H_1$ for the win (by Lemma~\ref{obs:key}(a)).

Therefore, we may assume that $H$ is connected and  $c(H) \geq 2$. This means that $H$ must contain an induced $k$-cycle for $k \in {4,5,6}.$
Suppose that $G$ contains an induced 4-cycle $x_1, x_2, x_3, x_4$. Without loss of generality, $x_5 \sim x_1$, and $x_6$ is adjacent to at most three of $\{ x_2,x_3,x_4, x_5 \}$ (because we already have $\deg(x_1) =3$). Start the cops at $u$ and $x_1$, so that $S(R)$ is one of $\{ x_3 \}$, $\{ x_6 \}$ or $\{x_3, x_6\}$. In the first two cases, $\Delta(S(R)) \leq 3$ so the cops win by Corollary \ref{cor:one-deg-3}. The last option occurs when $x_3 \sim x_6$. If $x_6$ has at most one neighbor in $N(u)$, then we are again done by Corollary~\ref{cor:one-deg-3}, since $\Delta(S(R)) \leq 3$. When $x_6$ has two neighbors in $N(u)$, the game play depends on the initial location of the robber. If the robber starts at $x_6$, then $C_1$ holds at $u$ while $C_2$ moves from $x_1$ to $x_2$ to $x_3$, trapping the robber. If the robber starts at $x_3$, then the roles are reversed: $C_1$ moves to $x_6$ in two steps while $C_2$ holds at $x_1$. At this point, the robber is trapped.

Next, suppose that $G$ contains an induced 5-cycle $x_1, x_2, x_3, x_4, x_5$. Without loss of generality, $x_6 \sim x_1$. If $x_6$ is adjacent to two of the $x_i$, then we can place $C_1$ at $u$ and $C_2$ at some $x_j$ so that $|N(S(R))\cap N(u)| \leq 1$, giving a cop winning position by Lemma~\ref{obs:key}(b). Indeed, by symmetry there are only 2 cases to consider: if $x_6\sim x_2$, then $C_2$ starts at $x_4$ and $S(R) = \{x_1, x_2, x_6\}$; if $x_6\sim x_3$, then $C_2$ starts $x_3$, and $S(R) =\{x_1, x_5\}$.
So we may assume that $x_6$ has no additional neighbors in $H$. There are two cases to consider.
If $x_2$ and $x_4$ do not share a neighbor in $N(u)$, then the game play begins with $C_2$ chasing $R$ onto $x_2$:
$\copst{u,x_1}{\{x_3, x_4\}} \crmove \cdots  \crmove  \copst{u, \{x_4, x_5\}}{x_2}$. If $x_2$ is not adjacent to $N(u)$, then the cops can ensure $S(R)$ satisfies Corollary \ref{cor:one-deg-3} on their next move. Indeed, $C_2$ moves to $x_4$. If $N(x_6)\cap N(u)=\emptyset$, then the situation already satisfies Corollary~\ref{cor:one-deg-3}, otherwise, $C_1$ moves to $N(x_6)\cap N(u)$, and now the situation satisfies Corollary~\ref{cor:one-deg-3}.

The final case to consider is when $x_2$ and $x_4$ are both adjacent to $y \in N(u)$. By symmetry, $x_3$ and $x_5$ are adjacent to $z \in N(u)$. By symmetry, there is one game to consider:
$\copst{u,x_1}{x_3} \crmove \copst{z,x_2}{x_4}$ which is cop-win by Corollary \ref{cor:one-deg-3}.
Thus, the only option for $H$ is an induced 6-cycle.
\end{proof}

We can now prove that the Petersen graph is the unique 3 cop-win graph of order 10. The following lemma
may be proved by checking the 18 possible 3-regular graphs of order 10 listed at
\cite{mckay}, but we provide a short proof for completeness.

\begin{lemma}\label{fact:petersen}
The Petersen graph is the only $3$-regular graph $G$ such that for every vertex $u\in V(G)$, $G[V(G)\setminus N[u]]$ is a $6$-cycle.
\end{lemma}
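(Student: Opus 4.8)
The plan is to use a counting/structure argument driven entirely by the hypothesis that for every vertex $u$, the graph $H_u := G[V(G)\setminus N[u]]$ is a $6$-cycle. Since $G$ is $3$-regular on $10$ vertices, $N[u]$ has exactly $4$ vertices, so $H_u$ has exactly $6$ vertices and, by hypothesis, is a $6$-cycle $C_6$; in particular it is $2$-regular, which forces each of the $3$ neighbors of $u$ to send exactly one edge into $H_u$ (the three neighbors of $u$ together contribute $3\cdot 3 - 2\cdot 3 = 3$ edges to the rest, but each vertex of $H_u$ needs $3-2=1$ edge out of the cycle, and there are $6$ such half-edges, so exactly $\tfrac{6}{2}$... wait — more carefully: each $x\in V(H_u)$ has degree $3$ in $G$ and degree $2$ inside $C_6$, hence exactly one neighbor outside $H_u$, and that neighbor must lie in $N(u)$ since it cannot be $u$; so there are exactly $6$ edges from $N(u)$ to $H_u$, i.e.\ each of the $3$ vertices of $N(u)$ has exactly $2$ neighbors in $H_u$ and hence exactly $1$ neighbor among $\{u\}\cup N(u)$, which must be $u$ — so $N(u)$ is an independent set). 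This already pins down a lot: $G$ is triangle-free.

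Next I would record that $G$ is triangle-free and then show it has girth exactly $5$. It cannot contain a $4$-cycle: if $x_1x_2x_3x_4$ were an induced (triangle-freeness makes any $4$-cycle induced) $4$-cycle, pick a vertex $u$ not on it and not adjacent to it — by a degree count such $u$ exists — and observe the $4$-cycle would have to sit inside the $6$-cycle $H_u$, which is impossible since $C_6$ has no chord. (One must check the adjacency bookkeeping: the $4$-cycle uses $4$ vertices, $N[u]$ uses $4$, and these can be made disjoint.) Hence girth $\ge 5$. A $3$-regular graph on $10$ vertices with girth $\ge 5$ is exactly the Moore bound case: $1 + 3 + 3\cdot 2 = 10$, so $G$ must be a Moore graph of degree $3$ and diameter $2$, which is the Petersen graph and is unique. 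Alternatively, avoiding the Moore-graph theorem, I would argue directly: fix $u$, write $N(u) = \{a,b,c\}$ and $H_u = x_1\cdots x_6$ (cyclically), use that each of $a,b,c$ has exactly two neighbors on the cycle, that these neighbor-pairs partition... no, cover the $6$ cycle vertices with each vertex covered once (since each $x_i$ has a unique outside neighbor), so the three pairs $N(a)\cap H_u, N(b)\cap H_u, N(c)\cap H_u$ form a perfect matching of $\{x_1,\dots,x_6\}$ into pairs; triangle-freeness plus the $C_6$-condition applied to one of the $x_i$ then forces the pairs to be "antipodal" on the cycle, and one checks this reconstructs the Petersen graph uniquely up to isomorphism.

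The main obstacle is the last step: turning "triangle-free, $3$-regular, girth $\ge 5$ on $10$ vertices, with every non-neighborhood a $6$-cycle" into uniqueness without simply invoking the classification of Moore graphs. I expect the cleanest route is to establish girth $\ge 5$ (the routine but slightly fiddly part being the disjointness bookkeeping in the $C_4$-exclusion), and then cite that the unique $(3,5)$-cage / degree-$3$ Moore graph is the Petersen graph; if the paper prefers a self-contained argument, I would instead do the explicit reconstruction: fix $u$ and $H_u = C_6$, use the perfect-matching structure of $\{N(a), N(b), N(c)\}\cap V(H_u)$, apply the hypothesis to a cycle vertex to rule out "short-chord" matchings, and verify the remaining configuration is forced and yields Petersen. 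Either way, the hypothesis that $G[V\setminus N[u]]$ is a $6$-cycle for every $u$ is doing almost all the work by simultaneously giving $3$-regularity of each $H_u$ (hence triangle-freeness) and forbidding $4$-cycles.
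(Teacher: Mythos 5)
Your argument is correct, and its main route is genuinely different from the one in the paper. You first extract global structural information from the hypothesis: the order is $10$, each vertex of the $6$-cycle $G[V\setminus N[u]]$ has exactly one neighbour in $N(u)$, hence $N(u)$ is independent for every $u$ (so $G$ is triangle-free), and any $4$-cycle could be placed inside some $G[V\setminus N[u]]\cong C_6$ (your covering count $|N[x_1]\cup\cdots\cup N[x_4]|\le 8<10$ is right), which is impossible; therefore $G$ has girth at least $5$ and, being $3$-regular on $10$ vertices, meets the Moore bound, so it is the unique $(3,5)$-cage, i.e.\ the Petersen graph. The paper instead does a direct hand reconstruction: it fixes one vertex $u$, labels $N(u)=\{y,z,w\}$ and the cycle $x_0,\dots,x_5$, and uses the hypothesis at the single additional vertices $x_0$ and $y$ to force, step by step, which $x_i$ each of $y,z,w$ is matched to, arriving at the Petersen graph with no appeal to girth or to cage/Moore-graph uniqueness. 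Your approach is more modular and conceptually cleaner (triangle-freeness and $C_4$-freeness are each one-line consequences of the hypothesis), but it outsources the final uniqueness step to the classification of the degree-$3$ Moore graph; if that citation is not wanted, your fallback ``antipodal matching'' reconstruction (adjacent pairs on the $C_6$ are excluded by triangle-freeness, distance-two pairs by $C_4$-freeness, so each of the three $N(u)$-vertices attaches to an antipodal pair, which determines the graph) is sound and is essentially the paper's argument in different clothing. Either way the proof goes through; the only presentational caveat is that your matching/antipodal endgame is sketched rather than fully written out, and should be expanded if the Moore-graph citation is to be avoided.
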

\begin{proof}
 Pick any vertex $u$ in $G$. The
  complement is a 6-cycle, where every vertex is adjacent to exactly
  one vertex in $N(u)$. Let $N(u)= \{y, z, w\}$.  Label the vertices of the $6$-cycle $x_i$, $0\le i \le 5$, where edges are between consecutive indices.  Without loss of generality, say $x_0\sim y$. Because $V \setminus N[x_0]$ is a
  6-cycle, we must have that $x_2\sim w$ and $x_4\sim z$ (by symmetry
  this is the only option). The only remaining edges to add are a
  matching between $x_1, x_3, x_0$ and $y, z, w$. To avoid a triangle
  in $V\setminus N[y]$, we cannot have $x_3\sim z$ or $x_3\sim w$; hence,
  $x_3\sim y$. Similarly, $x_1\sim z$, and $x_5\sim w$. But this gives an isomorphic copy of the
  Petersen graph.
\end{proof}

We now prove one of the main theorems.

\begin{proof}[Proof of Theorem~\ref{thm:petersen}]
Let $G$ be a graph of order 10 such that $c(G)=3$.
We have $\delta(G) \geq 2$: otherwise, the vertex of degree one $v \in V(G)$ is a dominated vertex, so $c(G)=c(G-v) \leq 2$ by Theorem
\ref{thm:9-vtx-c2}. Lemma \ref{lemma:deg-leq-3} ensures that $\Delta(G) \leq 3$. It is straightforward to see that $\Delta(G)=3$ since a connected 2-regular graph is a cycle which is 2-cop-win.

  Suppose a vertex $u \in V(G)$ has
$\deg(u)=3$. Then by Lemma~\ref{lemma:n-7}, $G[V \setminus N[u]]$ must be a
6-cycle. If every vertex in $N(u)$ has degree 3, then $G$ is 3-regular
with $c(G)=3$, and therefore, $G$ is the Petersen graph by Lemma~\ref{fact:petersen}. Otherwise,
there is a vertex $x_1 \in V \setminus N[u]$ with $\deg(v)=2$. In the rest of
the proof we give a winning strategy for the cops in this case.

Let the 6-cycle $G[V \setminus N[u]]$ be $\{ x_1, x_2, x_3, x_4, x_5, x_6 \}$ with edges between consecutive indices. Without loss of generality, $\deg(x_1)=2$ and $\deg(x_2)=3$. Let $k=\max\{i\mid \deg(x_i)=3\}$. The initial configuration is
$$\copst{u,x_4}{\{x_1,x_2,x_6\}}.$$ If $k \leq 5$, then the cops win by Corollary \ref{cor:one-deg-3}.
When $k=6$, the strategy depends on the initial robber location. Let $y \in N(u) \cap N(x_2)$. We either have
$\copst{u,x_4}{x_2} \crmove \copst{y,x_4}{x_1} \copmove \robst{y,x_5}{x_1}\copwin$, or
$\copst{u,x_4}{x_1} \crmove \copst{y,x_5}{x_1}\copwin$, or
$\copst{u,x_4}{x_6} \crmove \copst{u,x_5}{x_1}\ \copmove \robst{y,x_6}{x_1}\copwin$. The robber is trapped for every initial placement.
\end{proof}

\section{Further directions}

We conclude with some reflections on our results and some open problems. The
Petersen graph is the unique 3-regular graph of girth 5 of minimal
order, so that Theorem \ref{thm:petersen} provides a tight lower bound
for $n$ when $c(G)=3$. Recall that a $(k,g)$-\emph{cage} is a $k$-regular graph with
girth $g$ of minimal order. See \cite{cage} for a survey of cages. The Petersen graph is the unique $(3,5)$-cage, and in
general, cages exist for any pair $k \geq 2$ and $g \geq 3$.
Aigner and Fromme \cite{af} proved
that graphs with girth $5$, and degree $k$ have cop number at least
$k$; in particular, if $G$ is a $(k,5)$-cage, then $c(G) \geq k$.  Let
$n(k,g)$ denote the order of a $(k,g)$-cage.  Is it true that a
$(k,5)$-cage is $k$-cop-win?  Next, since we have $m_k \geq
n(k,5)$, it is natural to speculate whether $m_k = n(k,5)$ for $k
\geq 4$. It seems reasonable to expect that this is true at least for
small values of $k$.  It is known that $n(4,5)=19$, $n(5,5)=30$,
$n(6,5)=40$ and $n(7,5)=50$. Do any of these cages attain the
analogous $m_k$?
More generally, we can ask the same question for large $k$: is $m_k$ achieved by a $(k,5)$-cage?
It is known that $n(k,5) = \Theta(k^2)$, so an affirmative resolution would be consistent with
Theorem~\ref{iii}.

The techniques to prove Theorems~\ref{thm:9-vtx-c2} and \ref{thm:petersen} may prove useful in classifying the cop number of graphs with order $11$. We will consider this problem, and the value of $m_4$ in future work.

\subsection*{Acknowledgments}
 We  thank Volkan Isler, Graeme Kemkes, Richard Nowakowski,  Pawel Pra{\l }at, and Vishal Saraswat for helpful discussions. Part of this work was supported by  Institute for Mathematics and its Applications during its Summer 2010 Special Program on Interdisciplinary Research for Undergraduates.

\end{document}